\numberwithin{equation}{section}
 \newtheorem{theorem}{Theorem}[section]
 \newtheorem{lemma}[theorem]{Lemma}
\def\3bar{{|\hspace{-.02in}|\hspace{-.02in}|}}
\def\E{{\mathcal{E}}}
\def\T{{\mathcal{T}}}
\def\cal#1{{\mathcal #1}}
\def\pT{{\partial T}}
\def\bv{{\mathbf{v}}}
\def\bn{{\mathbf{n}}}
\def\beta{{\boldsymbol{\eta}}}
\def\bvarphi{{\boldsymbol{\varphi}}}
\newtheorem{remark}{Remark}[section]
\newtheorem{algorithm}{Auto-Stabilized  Weak Galerkin Algorithm}[section]
\numberwithin{equation}{section}
\def\3bar{{|\hspace{-.02in}|\hspace{-.02in}|}}
 \def\cal#1{\mathcal{#1}}
\begin{document}

\title []{Auto-Stabilized Weak Galerkin Finite Element Methods on Polytopal Meshes without Convexity Constraints}

\author {Chunmei Wang}
\address{Department of Mathematics, University of Florida, Gainesville, FL 32611, USA. }
\email{chunmei.wang@ufl.edu}
\thanks{The research of Chunmei Wang was partially supported by National Science Foundation Grant DMS-2136380.}

\begin{abstract} 
This paper introduces an auto-stabilized weak Galerkin (WG) finite element method with a built-in stabilizer for Poisson equations. By utilizing bubble functions as a key analytical tool, our method extends to both convex and non-convex elements in finite element partitions, marking a significant advancement over existing stabilizer-free WG methods. It overcomes the restrictive conditions of previous approaches and is applicable in any dimension 
$d$, offering substantial advantages. The proposed method maintains a simple, symmetric, and positive definite structure. These benefits are evidenced by optimal order error estimates in both discrete 
$H^1$
  and 
$L^2$
  norms, highlighting the effectiveness and accuracy of our WG method for practical applications.
\end{abstract}

\keywords{weak Galerkin, finite element methods, auto-stabilized, weak gradient, non-convex, bubble functions, ploytopal meshes, Poisson equations.}

\subjclass[2010]{65N30, 65N15, 65N12, 65N20}
 
\maketitle

\section{Introduction}
In this paper, we aim to develop an auto-stabilized  weak Galerkin finite element method with a built-in stabilizer applicable to convex and non-convex polytopal meshes in finite element partitions, representing a significant improvement over existing stabilizer-free methods \cite{ye}.
To this aim and for simplicity, we consider the Poisson equation with homogeneous Dirichlet boundary condition that seeks an unknown function $u$ such that
\begin{equation}\label{model}
 \begin{split}
 -\Delta u=&f, \qquad\qquad \text{in}\quad \Omega,\\
 u=&0,\qquad\qquad \text{on}\quad \partial\Omega,
 \end{split}
 \end{equation}
where $\Omega\subset \mathbb R^d$ is an open bounded domain with Lipschitz boundary $\partial\Omega$. Note that the domain $\Omega$ considered in this paper can be any dimension $d$.

The variational formulation of the model problem \eqref{model} can be formulated  as follows: Find an unknown function $u\in H_0^1(\Omega)$ such that
\begin{equation}\label{weak}
(\nabla u, \nabla v)=(f, v), \qquad \forall v\in H_0^1(\Omega),
 \end{equation} 
  where $H_0^1(\Omega)=\{v\in H^1(\Omega): v=0 \ \text{on}\ \partial \Omega\}$.

 The weak Galerkin  finite element method represents a recent advancement in numerical techniques for solving  PDEs. This method reconstructs or approximates differential operators within a framework that parallels the theory of distributions for piecewise polynomials. Unlike traditional finite element methods, WG relaxes the typical regularity requirements for approximating functions through carefully designed stabilizers. Extensive studies have explored the applicability of the WG method across various model PDEs, supported by a comprehensive list of references \cite{wg1, wg2, wg3, wg4, wg5, wg6, wg7, wg8, wg9, wg10, wg11, wg12, wg13, wg14, wg15, wg16, wg17, wg18, wg19, wg20, wg21, itera, wy3655}, highlighting its potential as a robust tool in scientific computing. 
What distinguishes WG methods from other finite element methods is their reliance on weak derivatives and weak continuities to design numerical schemes based on the weak forms of underlying PDE problems. This structural flexibility makes WG methods particularly effective across a wide spectrum of PDEs, ensuring stability and accuracy in their approximations.

A notable advancement within the WG framework is the ``Primal-Dual Weak Galerkin (PDWG)" approach. This method addresses challenges that traditional numerical techniques struggle with \cite{pdwg1, pdwg2, pdwg3, pdwg4, pdwg5, pdwg6, pdwg7, pdwg8, pdwg9, pdwg10, pdwg11, pdwg12, pdwg13, pdwg14, pdwg15}. PDWG views numerical solutions as constrained minimizations of functionals, with constraints that mimic the weak formulation of PDEs using weak derivatives. This formulation results in an Euler-Lagrange equation that integrates both the primal variable and the dual variable (Lagrange multiplier), yielding a symmetric scheme. 
 
This paper introduces a straightforward formulation of the weak Galerkin finite element method that eliminates the need for stabilizers. Unlike existing stabilizer-free WG methods \cite{ye}, our approach is applicable to convex and non-convex polytopal meshes in any dimension 
$d$
 and supports flexible polynomial degrees. The key analytical tool enabling these advancements is the use of bubble functions. The trade-off involves utilizing higher-degree polynomials to compute the discrete weak gradient, which might be impractical for certain applications. However, our focus lies on the significant theoretical contributions to auto-stabilized WG methods with built-in stabilizers for non-convex elements in finite element partitions.

Our method preserves the size and global sparsity of the stiffness matrix, substantially simplifying programming complexity compared to traditional stabilizer-dependent WG methods. Theoretical analysis demonstrates that our WG approximations achieve optimal error estimates in both the discrete 
$H^1$
 and 
$L^2$
 norms. By offering an auto-stabilized WG method with built-in stabilizers  that maintains performance while reducing complexity, this paper makes a substantial contribution to the advancement of finite element methods on non-convex polytopal meshes.

 The contributions of our WG method are summarized as follows:
 \textbf{1. Handling Non-Convex Polytopal Meshes:} Unlike the method in \cite{ye}, which is restricted to convex elements in finite element partitions, our method effectively addresses non-convex polytopal meshes by utilizing bubble functions as a critical analysis tool. This significantly enhances its practical applicability.
\textbf{2. Flexibility with Bubble Functions:} Our method uses bubble functions without imposing restrictive conditions, in contrast to \cite{ye}. This flexibility facilitates broader generalization to various types of PDEs without the implementation complexities typically associated with such conditions proposed in \cite{ye}.
\textbf{3. Applicability in Higher Dimensions:}  While the method in \cite{ye} is limited to 2D or 3D settings, our approach extends to any dimension 
$d$, providing greater versatility for higher-dimensional problems.
\textbf{4. Flexible Polynomial Degrees:}  
Our approach supports varying polynomial degrees, offering increased flexibility in the discretization process.
Overall, these improvements result in a method that offers enhanced flexibility, broader applicability, and ease of implementation in diverse computational settings compared to existing stabilizer-free WG methods \cite{ye}.

While Sections 5-7 share some similarities with the work presented in \cite{ye}, our research introduces a more versatile WG scheme capable of handling non-convex polytopal elements in finite element partitions by employing bubble functions as a crucial analysis tool, as detailed above. To provide a comprehensive understanding of our contributions, we include an in-depth analysis of the error estimates in these sections. Although this analysis shares some aspects with \cite{ye}, it is essential for demonstrating the significant improvements and expanded applicability of our method. 

Although our algorithm shares some similarities with the one in \cite{ye}, it offers unique contributions, as detailed earlier. Our primary focus is on the algorithms and analytical advancements rather than empirical validation. The comprehensive numerical tests in \cite{ye} sufficiently demonstrate the efficacy of these algorithms, rendering their repetition unnecessary. Thus, our paper emphasizes challenging theoretical analysis, providing crucial insights for future development and applications.

This paper is structured as follows.  
Section 2 offers a brief review of the definition of the weak gradient and its discrete version. In Section 3, we introduce a simple weak Galerkin scheme that operates without the use of stabilizers. Section 4 is dedicated to proving the existence and uniqueness of the solution. In Section 5, we derive the error equation for the proposed weak Galerkin scheme. Section 6 focuses on deriving the error estimate for the numerical approximation in the energy norm. Finally, Section 7 establishes the error estimate for the numerical approximation in the  $L^2$ norm.  

The standard notations are adopted throughout this paper. Let $D$ be any open bounded domain with Lipschitz continuous boundary in $\mathbb{R}^d$. We use $(\cdot,\cdot)_{s,D}$, $|\cdot|_{s,D}$ and $\|\cdot\|_{s,D}$ to denote the inner product, semi-norm and norm in the Sobolev space $H^s(D)$ for any integer $s\geq0$, respectively. For simplicity, the subscript $D$ is  dropped from the notations of the inner product and norm when the domain $D$ is chosen as $D=\Omega$. For the case of $s=0$, the notations $(\cdot,\cdot)_{0,D}$, $|\cdot|_{0,D}$ and $\|\cdot\|_{0,D}$ are simplified as $(\cdot,\cdot)_D$, $|\cdot|_D$ and $\|\cdot\|_D$, respectively. 


\section{Weak Gradient and Discrete Weak Gradient}\label{Section:Hessian}
In this section, we shall briefly review the definition of weak gradient and its discrete version introduced in \cite{wy3655}.

Let $T$ be a polytopal element with boundary $\partial T$. A weak function on $T$  is defined as   $v=\{v_0, v_b\}$, where $v_0\in L^2(T)$, $v_b\in L^{2}(\partial T)$. The first component $v_0$ and the second component $v_b$ represent the value of $v$ in the interior of $T$ and on the boundary of $T$, respectively. Generally, $v_b$ is assumed to be independent of the trace of $v_0$. The special case occurs when $v_b= v_0|_{\partial T}$, where the function $v=\{v_0, v_b\}$ is uniquely determined by $v_0$ and can be simply denoted as $v=v_0$.
 
 Denote by $W(T)$ the space of all weak functions on $T$; i.e.,
 \begin{equation}\label{2.1}
 W(T)=\{v=\{v_0,v_b\}: v_0\in L^2(T), v_b\in L^{2}(\partial
 T)\}.
\end{equation}
 
 The weak gradient, denoted by $\nabla_{w}$, is a linear
 operator from $W(T)$ to the dual space of $[H^{1}(T)]^d$. For any
 $v\in W(T)$, the weak gradient  $\nabla_w v$ is defined as a bounded linear functional on $[H^{1}(T)]^d$
such that
 \begin{equation}\label{2.3}
  (\nabla _{ w}v, \bvarphi)_T=-(v_0,\nabla\cdot \bvarphi)_T+
  \langle v_b, \bvarphi\cdot \bn \rangle_{\partial T},\quad \forall \bvarphi\in [H^{1}(T)]^d,
  \end{equation}
 where $ \bn$ is an unit outward normal direction to $\partial T$.
 
 For any non-negative integer $r$, let $P_r(T)$ be the space of
 polynomials on $T$ with total degree at most 
 $r$. A discrete weak
 gradient on $T$, denoted by $\nabla_{w, r, T}$, is a linear operator
 from $W(T)$ to $[P_r(T)]^d$. For any $v\in W(T)$,
 $\nabla_{w, r, T}v$ is the unique polynomial vector in $[P_r(T)]^d$ satisfying
 \begin{equation}\label{2.4}
  (\nabla_{w,r,T} v, \bvarphi)_T=-(v_0,\nabla\cdot \bvarphi)_T+
  \langle v_b,  \bvarphi\cdot \bn \rangle_{\partial T},\quad \forall \bvarphi \in [P_r(T)]^d.
  \end{equation}
 For a smooth $v_0\in
 H^1(T)$,  applying the usual integration by parts to the first
 term on the right-hand side of (\ref{2.4})  gives
 \begin{equation}\label{2.4new}
  (\nabla_{w,r,T} v, \bvarphi)_T=(\nabla v_0, \bvarphi)_T+
  \langle v_b-v_0,  \bvarphi\cdot \bn \rangle_{\partial T},\quad \forall \bvarphi \in [P_r(T)]^d.
  \end{equation}

\section{Auto-Stabilized Weak Galerkin Algorithms}\label{Section:WGFEM}
 Let ${\cal T}_h$ be a finite element partition of the domain
 $\Omega\subset \mathbb R^d$ into polytopes. Assume that ${\cal
 T}_h$ is shape regular   \cite{wy3655}.
 Denote by ${\mathcal E}_h$ the set of all edges/faces  in
 ${\cal T}_h$ and ${\mathcal E}_h^0={\mathcal E}_h \setminus
 \partial\Omega$ the set of all interior edges/faces. Denote
 by $h_T$ the diameter of $T\in {\cal T}_h$ and $h=\max_{T\in {\cal
 T}_h}h_T$ the meshsize of the finite element partition ${\cal
 T}_h$.

 Let $k\geq 0$ and $q \geq 0$ be two nonnegative integers. We assume that $k\geq q$. For any element $T\in\T_h$, define a local
 weak finite element space; i.e.,
 \begin{equation}
 V(k, q, T)=\{\{v_0,v_b\}: v_0\in P_k(T),v_b\in P_q(e), e\subset \partial T\}.   
 \end{equation}
By patching $V(k, q,T)$ over all the elements $T\in {\cal T}_h$ through
 a common value $v_b$ on the interior interface $\E_h^0$,
 we obtain a global weak finite element space; i.e.,
 $$
 V_h=\big\{\{v_0,v_b\}:\ \{v_0,v_b\}|_T\in V(k, q, T),
 \forall T\in {\cal T}_h \big\}.
 $$
Denote by $ V_h^0$ the subspace of $V_h$ with vanishing boundary value on $\partial\Omega$; i.e.,
$$
V_h^0=\{v\in V_h: v_b=0 \ \text{on}\ \partial\Omega\}.
$$

For simplicity of notation and without confusion, for any $v\in V_h$, we denote the discrete weak gradient 
$\nabla_{w, r, T} v$ simply as
  $\nabla_{w}v$,  computed on each element $T$ using
(\ref{2.4}); that is,
$$
(\nabla_{w} v)|_T= \nabla_{w, r, T}(v |_T), \qquad \forall T\in \T_h.
$$ 

The simple WG numerical scheme without the use of stabilizers for the Possion equation  (\ref{model}) is formulated as follows.
\begin{algorithm}\label{PDWG1}
Find $ u_h=\{u_0, u_b\} \in V_h^0$, such that  
\begin{equation}\label{WG}
(\nabla_w u_h, \nabla_w v)=(f, v_0), \qquad\forall v=\{v_0, v_b\}\in V_h^0,
\end{equation}
where
$$
(\nabla_w u_h, \nabla_w v)=\sum_{T\in {\cal T}_h} (\nabla_w u_h, \nabla_w v)_T,
$$ 
$$
(f, v_0)=\sum_{T\in {\cal T}_h}(f, v_0)_T.
$$
\end{algorithm}

\section{Solution Existence and Uniqueness} 
 
Recall that ${\cal T}_h$ is a shape-regular finite element partition of the domain $\Omega$. Therefore,  for any $T\in {\cal T}_h$ and $\phi\in H^1(T)$,
 the following trace inequality holds true \cite{wy3655}; i.e.,
\begin{equation}\label{tracein}
 \|\phi\|^2_{\partial T} \leq C(h_T^{-1}\|\phi\|_T^2+h_T \|\nabla \phi\|_T^2).
\end{equation}
If $\phi$ is a polynomial on the element $T\in {\cal T}_h$,  the following trace inequality holds true \cite{wy3655}; i.e.,
\begin{equation}\label{trace}
\|\phi\|^2_{\partial T} \leq Ch_T^{-1}\|\phi\|_T^2.
\end{equation}

For any $v=\{v_0, v_b\}\in V_h$, we define
the following discrete energy norm \begin{equation}\label{3norm}
\3bar v\3bar=\Big( \sum_{T\in {\cal T}_h} (\nabla_w v, \nabla_w v)_T\Big)^{\frac{1}{2}},
\end{equation}
and the following discrete $H^1$ semi-norm 
\begin{equation}\label{disnorm}
\|v\|_{1, h}=\Big( \sum_{T\in {\cal T}_h} \|\nabla v_0\|_T^2+h_T^{-1}\|v_0-v_b\|_{\partial T}^2\Big)^{\frac{1}{2}}.
\end{equation}
\begin{lemma}\label{norm1}
 For $v=\{v_0, v_b\}\in V_h$, there exists a constant $C$ such that
 $$
 \|\nabla v_0\|_T\leq C\|\nabla_w v\|_T.
 $$
\end{lemma}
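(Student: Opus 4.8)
The plan is to test the defining identity \eqref{2.4new} of the discrete weak gradient against the concrete polynomial $\bvarphi=\nabla v_0$ and then reduce the asserted bound to a control of the boundary jump $v_0-v_b$ by the weak gradient. Since $v_0\in P_k(T)$, we have $\nabla v_0\in[P_{k-1}(T)]^d\subset[P_r(T)]^d$ provided the weak-gradient degree $r$ is taken large enough, so $\bvarphi=\nabla v_0$ is admissible and \eqref{2.4new} gives
\begin{equation*}
\|\nabla v_0\|_T^2=(\nabla_w v,\nabla v_0)_T-\langle v_b-v_0,\nabla v_0\cdot\bn\rangle_{\partial T}.
\end{equation*}
Cauchy--Schwarz bounds the first term by $\|\nabla_w v\|_T\,\|\nabla v_0\|_T$; for the boundary term I would use Cauchy--Schwarz on $\partial T$ together with the polynomial trace inequality \eqref{trace} to obtain $|\langle v_b-v_0,\nabla v_0\cdot\bn\rangle_{\partial T}|\le C h_T^{-1/2}\|v_0-v_b\|_{\partial T}\,\|\nabla v_0\|_T$. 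Dividing by $\|\nabla v_0\|_T$ yields the preliminary bound $\|\nabla v_0\|_T\le\|\nabla_w v\|_T+C h_T^{-1/2}\|v_0-v_b\|_{\partial T}$, so it remains to control the jump $\|v_0-v_b\|_{\partial T}$ by $\|\nabla_w v\|_T$.

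The heart of the argument, and where the bubble functions enter, is this boundary estimate. On each face $e\subset\partial T$ I would introduce a face bubble $b_e$ that is nonnegative on $T$, positive in the relative interior of $e$, and vanishes on $\partial T\setminus e$, and then build a vector test function $\bvarphi_e\in[P_r(T)]^d$ whose normal trace on $\partial T$ equals $b_e(v_b-v_0)$ on $e$ and $0$ on the remaining faces, obtained by extending the polynomial $(v_b-v_0)|_e$ into $T$ and multiplying by $b_e\bn$. Setting $\bvarphi_b=\sum_{e\subset\partial T}\bvarphi_e$ and inserting it into \eqref{2.4new}, the face-supported structure kills all cross contributions on the boundary and leaves
\begin{equation*}
\langle v_b-v_0,\bvarphi_b\cdot\bn\rangle_{\partial T}=\sum_{e\subset\partial T}\int_e b_e\,(v_0-v_b)^2\,ds=(\nabla_w v,\bvarphi_b)_T-(\nabla v_0,\bvarphi_b)_T.
\end{equation*}
A bubble-function norm equivalence, $\int_e b_e\rho^2\,ds\ge C\|\rho\|_e^2$ for polynomials $\rho$ of bounded degree, bounds the left-hand side below by $C\|v_0-v_b\|_{\partial T}^2$, while extension and inverse estimates give $\|\bvarphi_b\|_T\le C h_T^{1/2}\|v_0-v_b\|_{\partial T}$. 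Combined with Cauchy--Schwarz on the right, this produces the second inequality $h_T^{-1/2}\|v_0-v_b\|_{\partial T}\le C\big(\|\nabla_w v\|_T+\|\nabla v_0\|_T\big)$.

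The two displayed inequalities couple $\|\nabla v_0\|_T$ and $h_T^{-1/2}\|v_0-v_b\|_{\partial T}$, and I would resolve the coupling by testing \eqref{2.4new} against a single combined field $\nabla v_0+\gamma\,\bvarphi_b$ with a scaling parameter $\gamma$ and applying Young's inequality: choosing $\gamma$ and the Young parameters so that the coercive contributions $\|\nabla v_0\|_T^2$ and $h_T^{-1}\|v_0-v_b\|_{\partial T}^2$ dominate the cross terms $(\nabla v_0,\bvarphi_b)_T$ and $\langle v_b-v_0,\nabla v_0\cdot\bn\rangle_{\partial T}$, one absorbs the latter and is left with $\|\nabla v_0\|_T^2+h_T^{-1}\|v_0-v_b\|_{\partial T}^2\le C\|\nabla_w v\|_T^2$, whence the lemma follows at once.

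The main obstacle is the bubble-function construction itself on a general, possibly non-convex, polytopal element $T$: one must exhibit $b_e$ (and, if an interior bubble is also needed, $b_T$) with the stated positivity and vanishing properties and, crucially, prove the norm equivalence $\int_e b_e\rho^2\,ds\ge C\|\rho\|_e^2$ with a constant uniform over the shape-regular family and independent of $h_T$. This is exactly the ingredient that frees the method from the convexity restriction of \cite{ye}, and it also fixes how large the weak-gradient degree $r$ must be so that $\bvarphi_b\in[P_r(T)]^d$. A secondary but delicate technical point is the careful bookkeeping of the $h_T$ powers and the tuning of $\gamma$ and the Young parameters, so that the cross terms genuinely absorb into the coercive terms.
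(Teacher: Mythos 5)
There is a genuine gap in your final step: the coupled test field $\nabla v_0+\gamma\,\bvarphi_b$ with Young's inequality cannot close the argument, because the cross terms have exactly the same homogeneity as the coercive terms and the constants are not at your disposal. Write $x=\|\nabla v_0\|_T$ and $y=\|v_0-v_b\|_{\partial T}$. Testing \eqref{2.4new} with $\nabla v_0+\gamma\bvarphi_b$ produces the coercive quantity $x^2+\gamma c\,y^2$ (with $c$ the bubble norm-equivalence constant), while the two cross terms are bounded by $\gamma C h_T^{1/2}xy$ (from $\|\bvarphi_b\|_T\le Ch_T^{1/2}y$) and $Ch_T^{-1/2}xy$ (from the trace inequality applied to $\nabla v_0$). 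By AM--GM, absorbing $(\gamma Ch_T^{1/2}+Ch_T^{-1/2})xy$ into $x^2+\gamma c\,y^2$ requires $\gamma Ch_T^{1/2}+Ch_T^{-1/2}\le 2\theta\sqrt{\gamma c}$ with $\theta<1$; minimizing the left-hand side over $\gamma$ (optimal at $\gamma=h_T^{-1}$) reduces this to $C\le\theta\sqrt{c}$, a relation between fixed trace/extension constants and the bubble lower bound that is false in general and cannot be engineered by tuning $\gamma$ or the Young parameters. The same defect is visible in your two displayed inequalities directly: substituting the second into the first gives $x\le\|\nabla_w v\|_T+C_aC_b(\|\nabla_w v\|_T+x)$, which closes only under the uncheckable hypothesis $C_aC_b<1$.

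The paper avoids this circularity entirely by choosing a test function whose trace vanishes on \emph{all} of $\partial T$: with the element bubble $\Phi_B=l_1^2(x)\cdots l_N^2(x)\in P_{2N}(T)$ and $r=2N+k-1$, taking $\bvarphi=\Phi_B\nabla v_0$ in \eqref{2.4new} kills the boundary term identically, so $(\nabla_w v,\Phi_B\nabla v_0)_T=(\nabla v_0,\Phi_B\nabla v_0)_T$ with no jump term ever appearing. The domain inverse inequality $(\nabla v_0,\Phi_B\nabla v_0)_T\ge C\|\nabla v_0\|_T^2$ and Cauchy--Schwarz then give the lemma in three lines. Your face-bubble machinery (extension of $v_b$ and of the trace of $v_0$, the normal-direction test field, the norm equivalence on faces) is not wasted --- it is essentially the content of the paper's Lemma \ref{phi} and the first half of Lemma \ref{normeqva} --- but it is sequenced there \emph{after} Lemma \ref{norm1}, precisely so that $\|\nabla v_0\|_T\le C\|\nabla_w v\|_T$ is already available to eliminate the $\|\nabla v_0\|_T$ term on the right of the jump estimate, which is the step your proposal cannot perform. (Secondarily, your claim that the face-supported structure ``kills all cross contributions'' fails for non-convex $T$ with coplanar faces, where $l_m$ vanishes on $e_i$ as well; the paper's remarks repair this with a cutoff polynomial $\sigma$ and an $\epsilon$-absorption between the coplanar faces, though you did flag the bubble construction as the main obstacle.)
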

\begin{proof}  Let  $T\in {\cal T}_h$ be a polytopal element with $N$ edges/faces denoted by $e_1, \cdots, e_N$. It is important to emphasis that the polytopal element $T$  can be non-convex. On each edge/face $e_i$, we construct   a linear equation  $l_i(x)$ such that  $l_i(x)=0$ on edge/face $e_i$ as follows: 
$$l_i(x)=\frac{1}{h_T}\overrightarrow{AX}\cdot \bn_i, $$  where  $A=(A_1, \cdots, A_{d-1})$ is a given point on the edge/face $e_i$,  $X=(x_1, \cdots, x_{d-1})$ is any point on the edge/face $e_i$, $\bn_i$ is the normal direction to the edge/face $e_i$, and $h_T$ is the size of the element $T$. 

The bubble function of  the element  $T$ can be  defined as 
 $$
 \Phi_B =l^2_1(x)l^2_2(x)\cdots l^2_N(x) \in P_{2N}(T).
 $$ 
 It is straightforward to verify that  $\Phi_B=0$ on the boundary $\partial T$.    The function 
  $\Phi_B$  can be scaled such that $\Phi_B(M)=1$ where   $M$  represents the barycenter of the element $T$. Additionally,  there exists a sub-domain $\hat{T}\subset T$ such that $\Phi_B\geq \rho_0$ for some constant $\rho_0>0$.

For $v=\{v_0, v_b\}\in V_h$, letting $r=2N+k-1$ and
$\bvarphi=\Phi_B \nabla v_0\in [P_r(T)]^d$ in \eqref{2.4new}, we obtain
\begin{equation}\label{t1}
\begin{split}
(\nabla_w v, \Phi_B \nabla v_0)_T&=(\nabla v_0, \Phi_B \nabla v_0)_T+\langle v_b-v_0,  \Phi_B \nabla v_0 \cdot \bn\rangle_{\partial T}\\&=(\nabla v_0, \Phi_B \nabla v_0)_T,
\end{split}
\end{equation}
where we used $\Phi_B=0$ on $\partial T$.

From the domain inverse inequality \cite{wy3655},  there exists a constant $C$ such that 
\begin{equation}\label{t2}
(\nabla v_0, \Phi_B \nabla v_0)_T \geq C (\nabla v_0, \nabla v_0)_T.
\end{equation} 
From Cauchy-Schwarz inequality and \eqref{t1}-\eqref{t2}, we have
 $$
 (\nabla v_0, \nabla v_0)_T\leq C (\nabla_w v, \Phi_B \nabla v_0)_T  \leq C  \|\nabla_w v\|_T \|\Phi_B \nabla v_0\|_T  \leq C
\|\nabla_w v\|_T \|\nabla v_0\|_T,
 $$
which gives
 $$
 \|\nabla v_0\|_T\leq C\|\nabla_w v\|_T.
 $$

This completes the proof of the lemma.
\end{proof}
 
\begin{remark}
   If the polytopal element $T$  is convex, 
   the bubble function of  the element  $T$ in Lemma \ref{norm1}  can be  simplified to
 $$
 \Phi_B =l_1(x)l_2(x)\cdots l_N(x).
 $$ 
It can be verified that there exists a sub-domain $\hat{T}\subset T$,  such that
 $ \Phi_B\geq\rho_0$  for some constant $\rho_0>0$,  and $\Phi_B=0$ on the boundary $\partial T$.   Lemma \ref{norm1}   can be proved in the same manner using this simplified construction. In this case, we take $r=N+k-1$.  
\end{remark}

Recall that $T$ is a $d$-dimensional polytopal element and  $e_i$ is a $d-1$-dimensional edge/face  of $T$. 
We construct an edge/face-based bubble function   $$\varphi_{e_i}= \Pi_{k=1, \cdots, N, k\neq i}l_k^2(x).$$ It can be verified that  (1) $\varphi_{e_i}=0$ on the edge/face $e_k$ for $k \neq i$, (2) there exists a subdomain $\widehat{e_i}\subset e_i$ such that $\varphi_{e_i}\geq \rho_1$ for some constant $\rho_1>0$.

\begin{lemma}\label{phi}
     For $v=\{v_0, v_b\}\in V_h$, let $\bvarphi=(v_b-v_0) \bn\varphi_{e_i}$, where $\bn$ is the unit outward normal direction to the edge/face  $e_i$. The following inequality holds:
\begin{equation}
  \|\bvarphi\|_T ^2 \leq Ch_T \int_{e_i}((v_b-v_0)\bn)^2ds.
\end{equation}
\end{lemma}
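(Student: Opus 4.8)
The plan is to reduce the estimate to a scale-free norm equivalence on a rescaled copy of $T$. First I would note that, since $\bn$ is a unit vector, the vector field $\bvarphi=(v_b-v_0)\bn\varphi_{e_i}$ satisfies $\|\bvarphi\|_T^2=\int_T (v_b-v_0)^2\varphi_{e_i}^2\,dx$. Here $v_b-v_0$ must be read as the trace difference on $e_i$ extended into $T$ as the polynomial that is constant in the $\bn$-direction; this is exactly the convention that makes $\bvarphi$ a genuine element of $[P_r(T)]^d$ with $r=k+2(N-1)$, hence admissible in \eqref{2.4new}. I would also record the elementary bound $0\le\varphi_{e_i}\le 1$ on $T$: each factor obeys $|l_k(x)|=h_T^{-1}|\overrightarrow{AX}\cdot\bn_k|\le h_T^{-1}|\overrightarrow{AX}|\le 1$ because $|\overrightarrow{AX}|\le \mathrm{diam}(T)=h_T$, so $\varphi_{e_i}=\prod_{k\ne i}l_k^2\le 1$.

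Next I would rescale. Writing $\widehat{T}=h_T^{-1}(T-x_0)$, a copy of $T$ of unit diameter, the functions $l_k$, and therefore $\varphi_{e_i}$, are invariant and of order one, while $dx$ contributes a factor $h_T^{d}$ and the surface measure $ds$ on $e_i$ contributes $h_T^{d-1}$. Denoting by $\widehat{g}$ the image of $v_b-v_0$ on $\widehat{T}$, the target inequality is therefore equivalent, after cancelling the common power $h_T^{d}$, to the scale-free bound
\begin{equation*}
\int_{\widehat{T}}\widehat{g}^{\,2}\,\widehat{\varphi}_{e_i}^{\,2}\,d\widehat{x}\le C\int_{\widehat{e}_i}\widehat{g}^{\,2}\,d\widehat{s}.
\end{equation*}

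To close the argument I would invoke equivalence of norms on a finite-dimensional space. Both sides are squares of seminorms on the finite-dimensional space of polynomials of degree at most $k$ that are constant in the $\bn$-direction. The left-hand side is in fact a norm, since $\widehat{\varphi}_{e_i}>0$ almost everywhere on $\widehat{T}$ forces $\widehat{g}\equiv0$ whenever the integral vanishes. The right-hand side is also a norm on this space: because $\widehat{g}$ is constant along $\bn$, its vanishing on the $(d-1)$-dimensional face $\widehat{e}_i$ (a set with nonempty relative interior) forces $\widehat{g}$ to vanish identically. Two norms on a finite-dimensional space are equivalent, giving the reference inequality with a constant depending only on $d$, $k$, $N$ and the shape of $\widehat{T}$; undoing the scaling then restores the factor $h_T$ and completes the proof.

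The step I expect to be the main obstacle is precisely the nondegeneracy of the right-hand side, i.e. that restriction to $e_i$ defines a norm. This is where the extension convention for $v_b-v_0$ is essential: for an arbitrary degree-$k$ polynomial on $T$ the face restriction is only a seminorm, since it annihilates every multiple of $l_i$, and the claimed bound would then fail; it is the reading of $v_b-v_0$ as the $\bn$-constant extension of a face datum that rescues it. A secondary point needing care is the uniformity of the equivalence constant over the admissible family of unit-diameter elements $\widehat{T}$, which is supplied by shape regularity.
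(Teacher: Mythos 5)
Your proof is correct and follows essentially the same route as the paper's: both arguments hinge on reading $v_b-v_0$ inside $\bvarphi$ as the difference of the face datum $v_b$ and the trace $v_{trace}$ of $v_0$ on $e_i$, each extended into $T$ as a polynomial constant in the direction normal to the hyperplane containing $e_i$ --- the paper realizes this via the orthogonal projection $Proj_{e_i}(X)$ and verifies that the projection is affine, which is exactly the $\bn$-constant extension you use. The one place you go beyond the paper is the key volume-to-face inequality $\int_T(\cdot)^2\,dT\le Ch_T\int_{e_i}(\cdot)^2\,ds$: the paper asserts it by appeal to ``the properties of the projection,'' whereas you justify it by rescaling to a unit-diameter element and invoking equivalence of norms on the finite-dimensional space of polynomials of degree at most $k$ that are constant along $\bn$, including the correct nondegeneracy check that restriction to $e_i$ is a genuine norm on that space; your diagnosis that the lemma would fail for the unextended $v_0$ (the face restriction is then only a seminorm, annihilating multiples of $l_i$) is exactly right and is only implicit in the paper's statement. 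Your caveat that uniformity of the equivalence constant over the admissible family of rescaled elements must come from shape regularity is also apt --- the paper passes over this point at the same level of informality.
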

\begin{proof}
 We first extend $v_b$, initially defined on the $(d-1)$-dimensional edge/face  $e_i$, to the entire d-dimensional polytopal element $T$  using  the following formula:
$$
 v_b (X)= v_b(Proj_{e_i} (X)),
$$
where $X=(x_1,\cdots,x_d)$ is any point in the  element $T$, $Proj_{e_i} (X)$ denotes the orthogonal projection of the point $X$ onto  the hyperplane $H\subset\mathbb R^d$  containing the edge/face  $e_i$. 
When the projection $Proj_{e_i} (X)$ is not on the edge/face $e_i$, $v_b(Proj_{e_i} (X))$ is defined to be  the extension of $v_b$ from $e_i$ to the hyperplane $H$.

We claim that $v_b$ remains  a polynomial defined on the element $T$ after the extension.  

Let the hyperplane $H$ containing the edge/face  $e_i$ be defined by $d-1$ linearly independent vectors $\beta_1, \cdots, \beta_{d-1}$ originating from a point $A$ on $e_i$. Any point $P$ on the hyperplane $H$ can be parametrized as
$$
P(t_1, \cdots, t_{d-1})=A+t_1\beta_1+\cdots+t_{d-1}\beta_{d-1},
$$
where $t_1, \cdots, t_{d-1}$ are parameters.

Note that $v_b(P(t_1, \cdots, t_{d-1}))$ is a polynomial of degree $q$ defined on  the edge/face  $e_i$. It can be expressed as:
$$
v_b(P(t_1, \cdots, t_{d-1}))=\sum_{|\alpha|\leq q}c_{\alpha}\textbf{t}^{\alpha},
$$
where $\textbf{t}^{\alpha}=t_1^{\alpha_1}\cdots t_{d-1}^{\alpha_{d-1}}$ and $\alpha=(\alpha_1, \cdots, \alpha_{d-1})$  is a multi-index.

For any point $X=(x_1, \cdots, x_d)$ in the element $T$, the projection of the point $X$  onto  the hyperplane $H\subset\mathbb R^d$  containing the edge/face  $e_i$ is the point on the hyperplane $H$ that minimizes the distance to  $X$. Mathematically, this projection $Proj_{e_i} (X)$ is an affine transformation which can be expressed as 
$$
Proj_{e_i} (X)=A+\sum_{i=1}^{d-1} t_i(X)\beta_i,
$$
where $t_i(X)$ are the projection coefficients, and $A$ is the origin point on $e_i$. The coefficients $t_i(X)$ are determined  by solving the orthogonality condition:
$$
(X-Proj_{e_i} (X))\cdot \beta_j=0, \forall j=1, \cdots, d-1.
$$
This results in a system of linear equations in $t_1(X)$, $\cdots$, $t_{d-1}(X)$, which  can be solved to yield:
$$
t_i(X)= \text{linear function of} \  X.
$$
Hence, the projection $Proj_{e_i} (X)$ is an affine linear function  of $X$.

We extend the polynomial $v_b$ from the edge/face  $e_i$ to the entire element $T$ by defining
$$
v_b(X)=v_b(Proj_{e_i} (X))=\sum_{|\alpha|\leq q}c_{\alpha}\textbf{t}(X)^{\alpha},
$$
where $\textbf{t}(X)^{\alpha}=t_1(X)^{\alpha_1}\cdots t_{d-1}(X)^{\alpha_{d-1}}$. Since $t_i(X)$ are linear functions of $X$, each term $\textbf{t}(X)^{\alpha}$ is a polynomial in $X=(x_1, \cdots, x_d)$.
Thus, $v_b(X)$ is a polynomial in the $d$-dimensional coordinates $X=(x_1, \cdots, x_d)$.

 Secondly, let $v_{trace}$ denote the trace of $v_0$ on the edge/face  $e_i$. We extend $v_{trace}$   to the entire element $T$  using  the following formula:
$$
 v_{trace} (X)= v_{trace}(Proj_{e_i} (X)),
$$
where $X$ is any point in the element $T$, $Proj_{e_i} (X)$ denotes the projection of the point $X$ onto the hyperplane $H$ containing the edge/face  $e_i$. When the projection $Proj_{e_i} (X)$ is not on the edge/face $e_i$, $v_{trace}(Proj_{e_i} (X))$ is defined to be  the extension of $v_{trace}$ from $e_i$ to the hyperplane $H$.
Similar to the case for $v_b$, $v_{trace}$ remains a polynomial after this extension. 

Let $\bvarphi=(v_b-v_0) \bn\varphi_{e_i}$. We have
\begin{equation*}
    \begin{split}
\|\bvarphi\|^2_T  =
\int_T \bvarphi^2dT =  &\int_T ((v_b-v_{trace})  \bn\varphi_{e_i})^2dT\\
\leq &Ch_T \int_{e_i} ((v_b-v_{trace})  \bn\varphi_{e_i})^2ds\\
\leq &Ch_T \int_{e_i} ((v_b-v_0)\bn)^2ds,
    \end{split}
\end{equation*} 
where we used the facts that  (1) $\varphi_{e_i}=0$ on the edge/face $e_k$ for $k \neq i$, (2) there exists a subdomain $\widehat{e_i}\subset e_i$ such that $\varphi_{e_i}\geq \rho_1$ for some constant $\rho_1>0$, and applied the properties of the projection.

 This completes the proof of the lemma.

\end{proof}

\begin{remark}
Consider any $d$-dimensional polytopal element $T$. 
  There exists a hyperplane $H\subset R^d$  such that a finite number $l$ of distinct $(d-1)$-dimensional edges/faces containing $e_{i}$ are completely contained within $H$. 
 For simplicity,   we consider the case of $l=2$: there exists a   hyperplane $H\subset R^d$    such that distinct $(d-1)$-dimensional edges/faces $e_i, e_m$ are completely contained within $H$.
 The edge/face-based bubble function  is defined as
$$\varphi_{e_i}= \Pi_{k=1, \cdots, N, k\neq i, m}l_k^2(x).$$ 
It is easy to check $\varphi_{e_i} = 0$ on edges/faces $e_k$ for $k \neq i, m$.


When the above case occurs, Lemma \ref{phi} needs to be revised as follows to accommodate the case. The last part of the proof of Lemma \ref{phi} is revised as follows.

Let $\sigma$ be a polynomial function satisfying $\sigma \geq \alpha$ on $e_i$ for some constant $\alpha > 0$ and $|\sigma| \leq \epsilon$ on $e_m$ for a sufficiently small $\epsilon > 0$. Let $\bvarphi = (v_b - v_0) \bn \varphi_{e_i} \sigma$. We have
\begin{equation*}
    \begin{split}
        \|\bvarphi\|^2_T =& \int_T \bvarphi^2 \, dT \\
        =& \int_T \left( (v_b - v_{trace}) \bn \varphi_{e_i} \sigma \right)^2 \, dT \\
        \leq & C h_T \int_{e_i} \left( (v_b - v_{trace})  \bn \varphi_{e_i} \sigma \right)^2  ds \\
             & + C h_T \int_{e_m}  \left( (\tilde{v}_b - \tilde{v}_{trace}) \bn \varphi_{e_i} \sigma \right)^2  ds \\
        \leq & C h_T \|(v_b - v_0) \bn    \|^2_{e_i} 
              + C h_T \epsilon^2 \|(\tilde{v}_b - \tilde{v}_0) \bn \|^2_{e_m} \\
        \leq & C h_T \|(v_b - v_0) \bn  \|^2_{e_i} 
              + C h_T \epsilon^2 \|(\tilde{v}_b - \tilde{v}_0) \bn\|^2_{e_i \cup e_m} \\
        \leq & C h_T \|(v_b - v_0) \bn  \|^2_{e_i} 
              + C h_T \epsilon^2 \|(v_b - v_0) \bn\|^2_{e_i  } \\
        \leq & C h_T \|(v_b - v_0) \bn\|^2_{e_i},
    \end{split}
\end{equation*} 
where  $\tilde{v}_b$ and $\tilde{v}_{trace}$ are the extensions of $v_b$ and $v_{trace}$ from $e_i$ to $e_m$. 
Here, we used the following facts:
(1) $\varphi_{e_i} = 0$ on edges/faces $e_k$ for $k \neq i, m$.
(2) There exists a subdomain $\widehat{e}_i \subset e_i$ such that $\varphi_{e_i} \geq \rho_1$ for some constant $\rho_1 > 0$. 
(3)  The domain inverse inequality.
(4)  The properties of the projection operators.
\end{remark}

\begin{lemma}\label{normeqva}   There exists two positive constants $C_1$ and $C_2$ such that for any $v=\{v_0, v_b\} \in V_h$, we have
 \begin{equation}\label{normeq}
 C_1\|v\|_{1, h}\leq \3bar v\3bar  \leq C_2\|v\|_{1, h}.
\end{equation}
\end{lemma}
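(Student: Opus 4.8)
The plan is to prove the two inequalities in \eqref{normeq} separately, working element by element and then summing over ${\cal T}_h$. Since $\3bar v\3bar^2=\sum_{T}\|\nabla_w v\|_T^2$ and $\|v\|_{1,h}^2=\sum_{T}\big(\|\nabla v_0\|_T^2+h_T^{-1}\|v_0-v_b\|_{\partial T}^2\big)$, it suffices to compare the local contributions on each $T\in{\cal T}_h$. The main tools are the discrete weak gradient identity \eqref{2.4new}, the polynomial trace inequality \eqref{trace}, and the two bubble-function estimates in Lemma \ref{norm1} and Lemma \ref{phi}.

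For the upper bound $\3bar v\3bar\le C_2\|v\|_{1,h}$, I would take the admissible test function $\bvarphi=\nabla_w v\in[P_r(T)]^d$ in \eqref{2.4new} to get
\[
\|\nabla_w v\|_T^2=(\nabla v_0,\nabla_w v)_T+\langle v_b-v_0,\nabla_w v\cdot\bn\rangle_{\partial T}.
\]
Applying Cauchy--Schwarz to both terms, and bounding $\|\nabla_w v\|_{\partial T}\le Ch_T^{-1/2}\|\nabla_w v\|_T$ via the polynomial trace inequality \eqref{trace} in the boundary term, yields $\|\nabla_w v\|_T\le\|\nabla v_0\|_T+Ch_T^{-1/2}\|v_0-v_b\|_{\partial T}$ after dividing through by $\|\nabla_w v\|_T$. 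Squaring and summing over $T$ gives the upper bound. This direction is routine.

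For the lower bound $C_1\|v\|_{1,h}\le\3bar v\3bar$, the gradient part $\sum_T\|\nabla v_0\|_T^2\le C\3bar v\3bar^2$ is exactly Lemma \ref{norm1}, so the real work is to control $h_T^{-1}\|v_0-v_b\|_{\partial T}^2$ by $\|\nabla_w v\|_T^2$. On each face $e_i$ of $T$ I would test \eqref{2.4new} against $\bvarphi_i=(v_b-v_0)\bn\,\varphi_{e_i}$, which is an admissible polynomial vector after the polynomial extension of $v_b-v_0$ described in Lemma \ref{phi}. Since $\varphi_{e_i}=0$ on every other face, the boundary integral collapses to a single face and yields
\[
\int_{e_i}(v_b-v_0)^2\varphi_{e_i}\,ds=(\nabla_w v,\bvarphi_i)_T-(\nabla v_0,\bvarphi_i)_T.
\]
Using $\varphi_{e_i}\ge\rho_1$ on $\widehat{e_i}\subset e_i$ together with the domain inverse inequality on $e_i$ (valid because $(v_b-v_0)|_{e_i}$ is a polynomial), the left-hand side controls $\|v_b-v_0\|_{e_i}^2$ from below. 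On the right-hand side, Cauchy--Schwarz combined with the bound $\|\bvarphi_i\|_T\le Ch_T^{1/2}\|v_b-v_0\|_{e_i}$ from Lemma \ref{phi}, and Lemma \ref{norm1} to absorb $\|\nabla v_0\|_T$ into $\|\nabla_w v\|_T$, gives $\|v_b-v_0\|_{e_i}\le Ch_T^{1/2}\|\nabla_w v\|_T$ after cancelling one factor of $\|v_b-v_0\|_{e_i}$. Summing over the faces of $T$ produces $h_T^{-1}\|v_0-v_b\|_{\partial T}^2\le C\|\nabla_w v\|_T^2$, and combining with Lemma \ref{norm1} and summing over $T$ completes the lower bound.

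The main obstacle is precisely this lower bound on the mismatch term $h_T^{-1}\|v_0-v_b\|_{\partial T}^2$: the entire purpose of the bubble-function machinery is to manufacture admissible polynomial test functions in \eqref{2.4new} that isolate one face at a time and convert the single quantity $\nabla_w v$ into simultaneous control of both $\nabla v_0$ and the interior/boundary jump $v_0-v_b$, even when $T$ is non-convex. The upper bound, by contrast, follows almost immediately from the definition of $\nabla_w v$ and the trace inequality.
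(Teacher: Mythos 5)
Your proposal is correct and matches the paper's own proof essentially step for step: the upper bound is obtained exactly as in the paper by testing \eqref{2.4new} against $\nabla_w v$ and using the polynomial trace inequality \eqref{trace}, and the lower bound combines Lemma \ref{norm1} with the same face-by-face test functions $\bvarphi=(v_b-v_0)\bn\,\varphi_{e_i}$ (built on the polynomial extension from Lemma \ref{phi}), the vanishing of $\varphi_{e_i}$ on the other faces, the lower bound $\varphi_{e_i}\geq\rho_1$ on $\widehat{e_i}$ with the domain inverse inequality, and the estimate $\|\bvarphi\|_T\leq Ch_T^{1/2}\|v_b-v_0\|_{e_i}$ before summing over faces and elements. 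There is nothing to correct.
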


\begin{proof}   
 Note that the polytopal element $T$ can be non-convex.
Recall that an edge/face-based bubble function  is defined as  $$\varphi_{e_i}= \Pi_{k=1, \cdots, N, k\neq i}l_k^2(x).$$   


We first extend $v_b$ from the edge/face $e_i$ to the element $T$. 
Next, let $v_{trace}$ denote the trace of $v_0$ on the edge/face $e_i$ and extend $v_{trace}$ to the element $T$. For simplicity, we continue to denote these extensions as $v_b$ and $v_0$. Details of the extensions can be found in Lemma \ref{phi}.

Choosing $\bvarphi=(v_b-v_0) \bn\varphi_{e_i}$ in \eqref{2.4new}, gives
\begin{equation}\label{t3} 
  (\nabla_{w} v, \bvarphi)_T=(\nabla v_0, \bvarphi)_T+
  {  \langle v_b-v_0,  \bvarphi\cdot \bn \rangle_{\partial T}} =(\nabla v_0, \bvarphi)_T+ \int_{e_i}|v_b-v_0|^2 \varphi_{e_i}ds,
  \end{equation} 
   where we used (1) $\varphi_{e_i}=0$ on the edge/face $e_k$ for $k \neq i$, 
(2) there exists a subdomain $\widehat{e_i}\subset e_i$ such that $\varphi_{e_i}\geq \rho_1$ for some constant $\rho_1>0$.

   Using Cauchy-Schwarz inequality, \eqref{t3}, the domain inverse inequality \cite{wy3655}, and  Lemma \ref{phi} gives
\begin{equation*}
\begin{split}
 \int_{e_i}|v_b-v_0|^2  ds\leq &C  \int_{e_i}|v_b-v_0|^2  \varphi_{e_i}ds \\
 \leq & C(\|\nabla_w v\|_T+\|\nabla v_0\|_T)\| \bvarphi\|_T\\
 \leq & {Ch_T^{\frac{1}{2}} (\|\nabla_w v\|_T+\|\nabla v_0\|_T) (\int_{e_i}((v_0-v_b)\bn)^2ds)^{\frac{1}{2}}},
 \end{split}
\end{equation*}
which, from Lemma \ref{norm1}, gives 
$$
 h_T^{-1}\int_{e_i}|v_b-v_0|^2  ds \leq C  (\|\nabla_w v\|^2_T+\|\nabla v_0\|^2_T)\leq C\|\nabla_w v\|^2_T.
$$
This, together with Lemma \ref{norm1},  \eqref{3norm} and \eqref{disnorm}, gives
$$
 C_1\|v\|_{1, h}\leq \3bar v\3bar.
$$

Next, from \eqref{2.4new}, Cauchy-Schwarz inequality and  the trace inequality \eqref{trace}, we have
$$
 \Big|(\nabla_{w} v, \bvarphi)_T\Big| \leq \|\nabla v_0\|_T \|  \bvarphi\|_T+
Ch_T^{-\frac{1}{2}}\|v_b-v_0\|_{\partial T} \| \bvarphi\|_{T},
$$
which yields
$$
\| \nabla_{w} v\|_T^2\leq C( \|\nabla v_0\|^2_T  +
 h_T^{-1}\|v_b-v_0\|^2_{\partial T}),
$$
 and further gives $$ \3bar v\3bar  \leq C_2\|v\|_{1, h}.$$

 This completes the proof of the lemma.
 \end{proof}

  \begin{remark}
   If the polytopal element $T$  is convex, 
  the edge/face-based bubble function in Lemma \ref{normeqva}  can be  simplified to
$$\varphi_{e_i}= \Pi_{k=1, \cdots, N, k\neq i}l_k(x).$$
It can be verified that (1)  $\varphi_{e_i}=0$ on the edge/face $e_k$ for $k \neq i$, (2) there exists a subdomain $\widehat{e_i}\subset e_i$ such that $\varphi_{e_i}\geq \rho_1$ for some constant $\rho_1>0$. Lemma \ref{normeqva}   can be proved in the same manner using this simplified construction.  
\end{remark}

 \begin{remark}
Consider any $d$-dimensional polytopal element $T$. 
  There exists a hyperplane $H\subset R^d$  such that a finite number $l$ of distinct $(d-1)$-dimensional edges/faces containing $e_{i}$ are completely contained within $H$. 
 For simplicity,   we consider the case of $l=2$: there exists a   hyperplane $H\subset R^d$    such that distinct $(d-1)$-dimensional edges/faces $e_i, e_m$ are completely contained within $H$.
 The edge/face-based bubble function  is defined as
$$\varphi_{e_i}= \Pi_{k=1, \cdots, N, k\neq i, m}l_k^2(x).$$ 
It is easy to check $\varphi_{e_i} = 0$ on edges/faces $e_k$ for $k \neq i, m$.

When the above case occurs, Lemma \ref{normeqva} needs to be revised as follows to accommodate the case. The related  part of the proof of Lemma \ref{normeqva} is revised as follows.

 Let $\sigma$ be a polynomial function satisfying $\sigma \geq \alpha$ on $e_i$ for some constant $\alpha>0$ and $|\sigma| \leq \epsilon$ on $e_m$ for a sufficiently small $\epsilon>0$. Choosing $\bvarphi=(v_b-v_0) \bn\varphi_{e_i}\sigma$ in \eqref{2.4new}, gives   \begin{equation}\label{t3-1}  
     \begin{split}
    &  (\nabla_{w} v, \bvarphi)_T\\ =&(\nabla v_0, \bvarphi)_T+
  {  \langle v_b-v_0,  \bvarphi\cdot \bn \rangle_{\partial T}} \\ =&(\nabla v_0, \bvarphi)_T+ \int_{e_{i}}|v_b-v_0|^2  \varphi_{e_i}\sigma ds+\int_{e_{m}}(v_b-v_0) (\tilde{v}_b-\tilde{v}_0)  \varphi_{e_i}\sigma ds\\
  \geq &(\nabla v_0, \bvarphi)_T+ \int_{e_{i}}|v_b-v_0|^2  \varphi_{e_i}\alpha  ds+\int_{e_{m}}(v_b-v_0)  (\tilde{v}_b-\tilde{v}_0)  \varphi_{e_i}\sigma ds, 
     \end{split}
  \end{equation}
   where 
  $\tilde{v}_b$ and $\tilde{v}_0$ are the extensions of $v_b$ and $v_0$  from  $e_i$ to $e_m$.

Using Cauchy-Schwarz inequality, \eqref{t3-1}, the domain inverse inequality \cite{wy3655}, and  Lemma \ref{phi} gives
\begin{equation}\label{s1}
\begin{split}
 &\int_{e_i}|v_b-v_0|^2  ds\\\leq &C  \int_{e_i}|v_b-v_0|^2  \varphi_{e_i}\alpha ds \\
 \leq & C(\|\nabla_w v\|_T+\|\nabla v_0\|_T)\| \bvarphi\|_T +  \Big|\int_{e_{m}}(v_b-v_0)  (\tilde{v}_b-\tilde{v}_0)  \varphi_{e_i}\sigma ds\Big|\\
 \leq & Ch_T^{\frac{1}{2}} (\|\nabla_w v\|_T+\|\nabla v_0\|_T)  \|v_0-v_b\|_{e_i} +  C\epsilon \|v_b-v_0\|_{e_m} \| \tilde{v}_b-\tilde{v}_0\|_{e_m}\\
 \leq & Ch_T^{\frac{1}{2}} (\|\nabla_w v\|_T+\|\nabla v_0\|_T)  \|v_0-v_b\|_{e_i} +  C\epsilon \|v_b-v_0\|_{e_m} \| \tilde{v}_b-\tilde{v}_0\|_{e_m\cup e_i}\\
  \leq & Ch_T^{\frac{1}{2}} (\|\nabla_w v\|_T+\|\nabla v_0\|_T)  \|v_0-v_b\|_{e_i} +  C\epsilon \|v_b-v_0\|_{e_m} \| v_b-v_0\|_{e_i}.
 \end{split}
\end{equation}
This gives 
$$
\| v_b-v_0\|_{e_i} \leq Ch_T^{\frac{1}{2}} (\|\nabla_w v\|_T+\|\nabla v_0\|_T)   +  C\epsilon \|v_b-v_0\|_{e_m}.
$$

 Analogous to \eqref{s1}, we have
\begin{equation}\label{s2}
    \| v_b-v_0\|_{e_m} \leq Ch_T^{\frac{1}{2}} (\|\nabla_w v\|_T+\|\nabla v_0\|_T)   +  C\epsilon \|v_b-v_0\|_{e_i}.
\end{equation}

Substituting \eqref{s2} into \eqref{s1} gives
$$
\| v_b-v_0\|_{e_i} \leq Ch_T^{\frac{1}{2}} (1+\epsilon)(\|\nabla_w v\|_T+\|\nabla v_0\|_T)   + C \epsilon^2 \|v_b-v_0\|_{e_i}. 
$$
This gives 
$$
(1-\epsilon^2)\| v_b-v_0\|_{e_i} \leq Ch_T^{\frac{1}{2}} (1+\epsilon)(\|\nabla_w v\|_T+\|\nabla v_0\|_T). 
$$
Thus, we have
$$
\| v_b-v_0\|_{e_i} \leq Ch_T^{\frac{1}{2}} (\|\nabla_w v\|_T+\|\nabla v_0\|_T). 
$$
 \end{remark}
\begin{theorem}
The  WG Algorithm  \ref{PDWG1} has  a unique solution. 
\end{theorem}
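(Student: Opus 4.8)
The plan is to exploit the fact that \eqref{WG} is a square linear system posed on the finite-dimensional space $V_h^0$, so that existence and uniqueness of the solution are equivalent; it therefore suffices to show that the homogeneous problem has only the trivial solution. Concretely, I would set $f=0$, choose the test function $v=u_h$ in \eqref{WG}, and obtain
\begin{equation*}
(\nabla_w u_h, \nabla_w u_h)=0,\qquad\text{i.e.}\qquad \3bar u_h\3bar=0.
\end{equation*}

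The decisive input is the norm equivalence of Lemma \ref{normeqva}: the lower bound $C_1\|u_h\|_{1,h}\le \3bar u_h\3bar$ immediately yields $\|u_h\|_{1,h}=0$. Reading off the definition \eqref{disnorm} of the discrete $H^1$ semi-norm, this forces $\|\nabla u_0\|_T=0$ and $h_T^{-1}\|u_0-u_b\|_{\partial T}^2=0$ on every element $T\in\T_h$; equivalently, $u_0$ is constant on each $T$ and $u_0=u_b$ on each $\partial T$. I would then invoke mesh connectivity: since $u_b$ is single-valued across every interior interface $e\in\E_h^0$ by the construction of $V_h$, and $u_0=u_b$ on both sides of $e$, the element-wise constants agree across shared faces and propagate through the connected partition, so $u_0$ is globally constant. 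Finally, because $u_h\in V_h^0$ imposes $u_b=0$ on $\partial\Omega$, that global constant must be zero, giving $u_0\equiv 0$ and hence $u_b\equiv 0$, so $u_h=0$. This proves uniqueness and, via the linear-algebra equivalence, existence.

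The genuinely substantive content here is not the final deduction but the norm-equivalence Lemma \ref{normeqva} on which it rests — in particular the lower bound $C_1\|v\|_{1,h}\le\3bar v\3bar$, which is precisely what upgrades $\3bar\cdot\3bar$ from a seminorm to a true norm on $V_h^0$ and which is built on the bubble-function estimates of Lemmas \ref{norm1} and \ref{phi}. Granting that lemma, the present argument is short and standard; the only point meriting care is the connectivity step, where I would make explicit that equality of the interior and boundary traces propagates a single constant value across the whole connected partition before the boundary condition pins it to zero.
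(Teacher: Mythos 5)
Your proof is correct and follows essentially the same route as the paper: test the homogeneous equation with the solution itself, invoke the norm equivalence of Lemma \ref{normeqva} to conclude $\|u_h\|_{1,h}=0$, and then propagate the element-wise constants to zero using $u_0=u_b$ on each $\partial T$ together with the single-valuedness of $u_b$ and the boundary condition $u_b=0$ on $\partial\Omega$. The only difference is cosmetic: the paper phrases the argument as the difference of two putative solutions and proves uniqueness directly, while you set $f=0$ and make explicit the finite-dimensional square-system equivalence that yields existence from uniqueness, a step the paper leaves implicit.
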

\begin{proof}
Assume $u_h^{(1)}\in V_h^0$ and $u_h^{(2)}\in V_h^0$ are two different solutions of the WG Algorithm  \ref{PDWG1}. Then, $\eta_h= u_h^{(1)}-u_h^{(2)}\in V_h^0$ satisfies
$$
(\nabla_w \eta_h, \nabla_w v)=0, \qquad \forall v\in V_h^0.
$$
Letting $v=\eta_h$  gives $\3bar \eta_h\3bar=0$. From \eqref{normeq} we have $\|\eta_h\|_{1,h}=0$, which yields
$\nabla \eta_0=0$ on each $T$ and $\eta_0=\eta_b$ on each $\partial T$. Using the fact that $\nabla \eta_0=0$ on each $T$ gives $\eta_0=C$ on each $T$. This, together with $\eta_0=\eta_b$ on each $\partial T$ and  $\eta_b=0$ on $\partial \Omega$,   gives $\eta_0\equiv 0$ and  further $\eta_b\equiv 0$  and $\eta_h\equiv 0$ in the domain $\Omega$. Therefore, we have $u_h^{(1)}\equiv u_h^{(2)}$. This completes the proof of this theorem.
\end{proof}

\section{Error Equations}
On each element $T\in\T_h$, let $Q_0$ be the $L^2$ projection onto $P_k(T)$. On each  edge/face  $e\subset\partial T$, let $Q_b$ be the $L^2$ projection operator onto $P_{q}(e)$. For any $w\in H^1(\Omega)$, denote by $Q_h w$ the $L^2$ projection into the weak finite element space $V_h$ such that
$$
(Q_hw)|_T:=\{Q_0(w|_T),Q_b(w|_{\pT})\},\qquad \forall T\in\T_h.
$$
Denote by $Q_r$ the $L^2$ projection operator onto the finite element space of piecewise polynomials of degree $r$.

\begin{lemma}\label{Lemma5.1}   The following property holds true; i.e.,
\begin{equation}\label{pro}
\nabla_{w}u =Q_r\nabla u, \qquad \forall u\in H^1(T).
\end{equation}
\end{lemma}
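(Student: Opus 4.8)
The plan is to unwind both sides of the asserted identity directly from their definitions and to observe that they satisfy the same variational characterization on $[P_r(T)]^d$. First I would interpret $u\in H^1(T)$ as the weak function $u=\{u_0,u_b\}$ with $u_0=u$ and $u_b=u|_{\partial T}$, so that the trace of $u_0$ agrees with $u_b$ on $\partial T$ and the special case $v=v_0$ described in Section \ref{Section:Hessian} applies. This is the only place where the hypothesis $u\in H^1(T)$ (rather than a genuinely decoupled weak function) is used, and it is precisely what makes the boundary term below disappear.

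Next I would test the discrete weak gradient against an arbitrary $\bvarphi\in[P_r(T)]^d$ using the integration-by-parts form \eqref{2.4new}:
$$
(\nabla_{w}u,\bvarphi)_T=(\nabla u_0,\bvarphi)_T+\langle u_b-u_0,\ \bvarphi\cdot\bn\rangle_{\partial T}.
$$
Because $u_b=u_0|_{\partial T}$, the boundary integral vanishes identically, leaving $(\nabla_{w}u,\bvarphi)_T=(\nabla u,\bvarphi)_T$ for every $\bvarphi\in[P_r(T)]^d$.

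Then I would recall the defining property of the $L^2$ projection $Q_r$ onto the space of piecewise polynomials of degree $r$, namely $(Q_r\nabla u,\bvarphi)_T=(\nabla u,\bvarphi)_T$ for all $\bvarphi\in[P_r(T)]^d$. Subtracting the two identities yields $(\nabla_{w}u-Q_r\nabla u,\bvarphi)_T=0$ for all $\bvarphi\in[P_r(T)]^d$. Since both $\nabla_{w}u$ and $Q_r\nabla u$ lie in $[P_r(T)]^d$ (the former by the definition \eqref{2.4} of the discrete weak gradient, the latter by definition of $Q_r$), their difference is itself an admissible test function; choosing $\bvarphi=\nabla_{w}u-Q_r\nabla u$ forces $\|\nabla_{w}u-Q_r\nabla u\|_T^2=0$, hence the two polynomials coincide.

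There is essentially no hard step here: the argument is a direct comparison of two variational definitions, and the only substantive point is the vanishing of the boundary term, which hinges entirely on identifying $u_b$ with the trace of $u_0$. I note that nothing in the argument constrains the degree $r$, so the identity holds for any admissible $r$ used in computing $\nabla_{w,r,T}$.
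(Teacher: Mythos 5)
Your proposal is correct and follows essentially the same route as the paper's own proof: both use \eqref{2.4new} with $u_b=u|_{\partial T}$ to kill the boundary term and then invoke the defining property of the $L^2$ projection $Q_r$. Your only addition is to make explicit the final uniqueness step (testing the difference $\nabla_{w}u-Q_r\nabla u\in[P_r(T)]^d$ against itself), which the paper leaves implicit in the definition \eqref{2.4} of the discrete weak gradient.
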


\begin{proof} For any $u\in H^1(T)$,  using \eqref{2.4new} gives 
 \begin{equation*} 
 \begin{split}
  (\nabla_{w} u,\bvarphi)_T=&(\nabla u, \bvarphi)_T+
  \langle u|_{\partial T}-u|_T, \bvarphi \cdot \bn \rangle_{\partial T}\\=&(\nabla u, \bvarphi)_T\\
  =&(Q_r\nabla u, \bvarphi)_T, 
  \end{split}
  \end{equation*}   
  for any $\bvarphi\in [P_r(T)]$.  This  completes the proof of this lemma.
  \end{proof}

Let  $u$ and $u_h \in V_{h}^0$ be the exact solution of the Poisson equation \eqref{model} and its numerical approximation arising from  WG Algorithm  \ref{PDWG1}. We define the error function, denoted by $e_h$, as follows
\begin{equation}\label{error} 
e_h=u-u_h.
\end{equation}

\begin{lemma}\label{errorequa}
The error function $e_h$ given in (\ref{error}) satisfies the following error equation;  i.e., 
\begin{equation}\label{erroreqn}
(\nabla_w e_h, \nabla_w v)=\ell (u, v), \qquad \forall v\in V_h^0,
\end{equation}
where 
$$
\ell (u, v)=\sum_{T\in {\cal T}_h}\langle (I- Q_r)\nabla u\cdot\bn, v_0-v_b\rangle_{\partial T}.
$$
\end{lemma}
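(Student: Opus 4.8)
The plan is to derive the error equation directly from the strong form of the Poisson problem, testing against the interior component $v_0$ element by element and then reconciling the resulting boundary terms with the definition of the discrete weak gradient. First I would multiply $-\Delta u=f$ by $v_0$ on each element $T$ and apply the ordinary integration by parts, yielding
$$
(\nabla u, \nabla v_0)_T - \langle \nabla u\cdot\bn, v_0\rangle_{\partial T} = (f, v_0)_T.
$$
Summing over $T\in\T_h$ gives $\sum_T (\nabla u,\nabla v_0)_T - \sum_T\langle \nabla u\cdot\bn, v_0\rangle_{\partial T} = (f,v_0)$. The next observation is that, because $v\in V_h^0$ has a single-valued trace $v_b$ on each interior edge/face and $v_b=0$ on $\partial\Omega$, while $\nabla u\cdot\bn$ is continuous across interior interfaces (the exact solution being sufficiently regular), the quantity $\sum_T\langle \nabla u\cdot\bn, v_b\rangle_{\partial T}$ vanishes: the contributions from the two elements sharing an interior face cancel because of the opposite orientation of $\bn$, and boundary faces contribute nothing. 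Subtracting this zero term lets me replace $v_0$ by $v_0-v_b$ in the boundary sum.

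The second ingredient is to rewrite $(\nabla_w u,\nabla_w v)_T$. By Lemma \ref{Lemma5.1}, $\nabla_w u = Q_r\nabla u$, so $(\nabla_w u,\nabla_w v)_T=(Q_r\nabla u,\nabla_w v)_T$. I would then apply the identity \eqref{2.4new} with $\bvarphi=Q_r\nabla u\in[P_r(T)]^d$ to expand $(\nabla_w v, Q_r\nabla u)_T$ as $(\nabla v_0, Q_r\nabla u)_T + \langle v_b-v_0, Q_r\nabla u\cdot\bn\rangle_{\partial T}$. Since $\nabla v_0$ is itself a polynomial of degree at most $r$ (recall $r=2N+k-1\geq k-1$), the projection may be dropped in the volume term, giving $(\nabla v_0, Q_r\nabla u)_T=(\nabla u,\nabla v_0)_T$. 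Summing over $T$ then expresses $(\nabla_w u,\nabla_w v)$ as $\sum_T(\nabla u,\nabla v_0)_T + \sum_T\langle v_b-v_0, Q_r\nabla u\cdot\bn\rangle_{\partial T}$.

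Combining the two computations, the volume terms $\sum_T(\nabla u,\nabla v_0)_T$ match and cancel, leaving the two boundary sums to merge into $\sum_T\langle (I-Q_r)\nabla u\cdot\bn, v_0-v_b\rangle_{\partial T}=\ell(u,v)$, so that $(\nabla_w u,\nabla_w v)=(f,v_0)+\ell(u,v)$. Subtracting the scheme \eqref{WG}, namely $(\nabla_w u_h,\nabla_w v)=(f,v_0)$, and using the linearity of $\nabla_w$ together with $e_h=u-u_h$ gives $(\nabla_w e_h,\nabla_w v)=\ell(u,v)$, which is the claim.

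I expect the main obstacle to be the careful bookkeeping of the boundary terms rather than any deep estimate: verifying that $\nabla_w u$ is well-defined for the exact solution (viewing $u$ as the weak function $\{u|_T, u|_{\partial T}\}$ and invoking Lemma \ref{Lemma5.1}), justifying the cancellation of the $v_b$ contributions across interior faces, and correctly tracking the signs when the two boundary sums are combined into the single $(I-Q_r)$ expression. The algebra is routine once the role of $Q_r$ and the degree condition $r\geq k-1$ are in place; the conceptual care lies in the interface cancellation and in matching the projected and unprojected gradients.
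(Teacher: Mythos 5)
Your proposal is correct and follows essentially the same route as the paper's own proof: it uses Lemma \ref{Lemma5.1} together with \eqref{2.4new} (taking $\bvarphi=Q_r\nabla u$), integration by parts against $v_0$, and the cancellation $\sum_{T\in{\cal T}_h}\langle\nabla u\cdot\bn, v_b\rangle_{\partial T}=0$, before subtracting the scheme \eqref{WG}. The only cosmetic difference is the order of operations (you start from the strong form and reconcile afterwards, while the paper expands $(\nabla_w u,\nabla_w v)$ in one chain), and your explicit check that $\nabla v_0\in[P_r(T)]^d$ since $r\geq k-1$ is a point the paper leaves implicit.
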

\begin{proof}   Using \eqref{pro}, and letting $\bvarphi= Q_r \nabla u$ in \eqref{2.4new}, gives
\begin{equation}\label{54}
\begin{split}
&\sum_{T\in {\cal T}_h}(\nabla_w u, \nabla_w v)_T\\=&\sum_{T\in {\cal T}_h}(Q_r \nabla u, \nabla_w v)_T\\
=&\sum_{T\in {\cal T}_h}(Q_r \nabla u, \nabla v_0)_T+\langle Q_r \nabla u\cdot \bn, v_b-v_0\rangle_{\partial T}\\
=&\sum_{T\in {\cal T}_h}(\nabla u, \nabla v_0)_T +\langle   Q_r\nabla u\cdot \bn, v_b-v_0\rangle_{\partial T}\\
=&\sum_{T\in {\cal T}_h}(f, v_0)_T+\langle \nabla u\cdot\bn, v_0\rangle_{\partial T}  +\langle   Q_r\nabla u\cdot \bn, v_b-v_0\rangle_{\partial T}\\
=&\sum_{T\in {\cal T}_h}(f, v_0)_T+\langle (I- Q_r)\nabla u\cdot\bn, v_0-v_b\rangle_{\partial T},
\end{split}
\end{equation}
where we used \eqref{model}, the usual integration by parts, and the fact that $\sum_{T\in {\cal T}_h} \langle \nabla u\cdot\bn, v_b\rangle_{\partial T}= \langle \nabla u\cdot\bn, v_b\rangle_{\partial \Omega}=0$ since $v_b=0$ on $\partial \Omega$.  

Subtracting \eqref{WG} from \eqref{54}  gives 
$$
(\nabla_w e_h, \nabla_w v)=\sum_{T\in {\cal T}_h}\langle (I- Q_r)\nabla u\cdot\bn, v_0-v_b\rangle_{\partial T}.
$$

This completes the proof of the lemma.
\end{proof}

\section{Error Estimates}

\begin{lemma}\cite{wy3655}
Let ${\cal T}_h$ be a finite element partition of the domain $\Omega$ satisfying the shape regular assumption as specified in \cite{wy3655}. For any $0\leq s \leq 1$, $0\leq n \leq k$ and $0\leq m \leq r$, there holds
\begin{eqnarray}\label{error1}
 \sum_{T\in {\cal T}_h}h_T^{2s}\|\nabla u- Q_r \nabla u\|^2_{s,T}&\leq& C  h^{2m}\|u\|^2_{m+1},\\
\label{error2}
\sum_{T\in {\cal T}_h}h_T^{2s}\|u- Q _0u\|^2_{s,T}&\leq& C h^{2n+2}\|u\|^2_{n+1}.
\end{eqnarray}
 \end{lemma}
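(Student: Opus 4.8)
The plan is to deduce both \eqref{error1} and \eqref{error2} from a single per-element approximation property of the $L^2$ projection onto polynomials, and then to sum over $\T_h$. Since the statement is quoted from \cite{wy3655}, the substance lies entirely in the element-wise estimate, which rests on the Bramble--Hilbert lemma together with the shape-regularity of the partition.

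First I would record the local estimate. For a shape-regular polytope $T$ of diameter $h_T$, let $\pi_\ell$ denote the $L^2$ projection onto $P_\ell(T)$. Then for any $w\in H^t(T)$ with $0\le s\le t\le \ell+1$ there holds
$$
\|w-\pi_\ell w\|_{s,T}\le C h_T^{\,t-s}\,|w|_{t,T},
$$
with $C$ independent of $h_T$. This follows from the usual homogeneity argument: rescale $T$ to a configuration of unit diameter, observe that $(I-\pi_\ell)$ annihilates $P_{t-1}\subseteq P_\ell$ (here $t\le\ell+1$ is used), apply the Bramble--Hilbert lemma to the bounded residual operator $(I-\pi_\ell)$, and scale back. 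The shape-regularity assumption of \cite{wy3655} is precisely what makes $C$ uniform over all $T\in\T_h$.

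Next I would specialize. For \eqref{error1}, apply the local estimate componentwise to the vector $\nabla u$ with $\pi_\ell=Q_r$, $\ell=r$ and $t=m$, the admissibility $s\le m\le r+1$ being met (using $m\le r$). Together with $|\nabla u|_{m,T}\le|u|_{m+1,T}$ and $h_T\le h$ this gives, on each element,
$$
h_T^{2s}\,\|\nabla u-Q_r\nabla u\|_{s,T}^2\le C\,h_T^{2m}\,|\nabla u|_{m,T}^2\le C\,h^{2m}\,|u|_{m+1,T}^2,
$$
and summing over $T\in\T_h$ with $\sum_T|u|_{m+1,T}^2\le\|u\|_{m+1}^2$ yields \eqref{error1}. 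For \eqref{error2}, apply the estimate with $\pi_\ell=Q_0$, $\ell=k$ and $t=n+1$ (admissible since $n+1\le k+1$ and $s\le 1\le n+1$), obtaining
$$
h_T^{2s}\,\|u-Q_0u\|_{s,T}^2\le C\,h_T^{2(n+1)}\,|u|_{n+1,T}^2\le C\,h^{2n+2}\,|u|_{n+1,T}^2,
$$
and summation over the partition gives \eqref{error2}.

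The main obstacle is the uniformity of the local estimate across a family of possibly non-convex polytopal elements: on simplices the reference-element scaling is immediate, but on general polytopes one must invoke the shape-regularity conditions of \cite{wy3655} to secure a single Bramble--Hilbert constant along with the attendant scaling relations. Once this uniform per-element bound is in hand, the remaining steps are routine, relying only on $h_T\le h$ and the subadditivity of the Sobolev seminorms over $\T_h$.
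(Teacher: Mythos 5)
The paper does not actually prove this lemma --- it is quoted directly from \cite{wy3655} --- so your reconstruction should be measured against the standard argument in that source, and it follows essentially the same route: a per-element estimate for the $L^2$ projection obtained by scaling to unit diameter, applying Bramble--Hilbert to the residual operator $(I-\pi_\ell)$ (with an inverse/norm-equivalence step to handle $s=1$), using the shape-regularity hypotheses of \cite{wy3655} to make the constant uniform over the family of (possibly non-convex) polytopal elements, and then summing over $\T_h$ with $h_T\le h$. Your treatment of \eqref{error2} is complete: the admissibility $s\le 1\le n+1\le k+1$ holds throughout the stated range. There is, however, one slip in your treatment of \eqref{error1}: you assert the admissibility ``$s\le m\le r+1$'' and justify it by $m\le r$, but $m\le r$ only yields the upper bound $m\le r+1$; nothing in the hypotheses gives $s\le m$. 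The stated ranges permit $s=1$, $m=0$, where your local estimate (which requires $s\le t=m$) does not apply --- and at that corner the inequality as quoted cannot hold for general $u\in H^1(\Omega)$, since the left-hand side involves $|\nabla u|_{1,T}$ while the right-hand side controls only $\|u\|_1$. This imprecision is inherited from the lemma statement itself (implicitly $u$ is smooth enough, and the paper only ever invokes \eqref{error1} with $(m,s)=(k,0),(k,1),(1,0),(1,1)$, all satisfying $s\le m$ when $k\ge 1$), so your argument does cover every instance actually used in the error analysis; but you should state the restriction $s\le m$ explicitly rather than claim it follows from $m\le r$.
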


\begin{lemma}
Assume  the exact solution $u$ of the Poisson equation \eqref{model} is sufficiently regular such that $u\in H^{k+1} (\Omega)$. There exists a constant $C$, such that the following estimate holds true; i.e.,
\begin{equation}\label{erroresti1}
\3bar u-Q_hu \3bar \leq Ch^k\|u\|_{k+1}.
\end{equation}
\end{lemma}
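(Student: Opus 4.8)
The plan is to reduce the energy-norm estimate to the discrete $H^1$ semi-norm through the norm equivalence established in Lemma \ref{normeqva}, and then to bound the resulting approximation errors term by term using the trace inequalities \eqref{tracein}--\eqref{trace} together with the projection estimates \eqref{error1}--\eqref{error2}. Concretely, since $u\in H^{k+1}(\Omega)$ may be regarded as the weak function $\{u,u|_{\partial T}\}$ on each $T$, we have $u-Q_hu=\{u-Q_0u,\ u|_{\partial T}-Q_bu\}$, and Lemma \ref{normeqva} gives $\3bar u-Q_hu\3bar\le C_2\|u-Q_hu\|_{1,h}$. It therefore suffices to estimate
\begin{equation*}
\|u-Q_hu\|_{1,h}^2=\sum_{T\in\T_h}\|\nabla(u-Q_0u)\|_T^2+h_T^{-1}\big\|(u-Q_0u)-(u|_{\partial T}-Q_bu)\big\|_{\partial T}^2,
\end{equation*}
and to note that on $\partial T$ the jump simplifies to $(u-Q_0u)-(u-Q_bu)=Q_bu-Q_0u$.

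First I would treat the volumetric gradient term: the local approximation property encoded in \eqref{error2} with $s=1$ and $n=k$ yields $\|\nabla(u-Q_0u)\|_T\le Ch_T^{k}\|u\|_{k+1,T}$, so that $\sum_{T}\|\nabla(u-Q_0u)\|_T^2\le Ch^{2k}\|u\|_{k+1}^2$. For the boundary jump term I would use the triangle inequality $\|Q_bu-Q_0u\|_{\partial T}\le\|u-Q_0u\|_{\partial T}+\|u-Q_bu\|_{\partial T}$. The first piece is handled by the trace inequality \eqref{tracein} applied to $u-Q_0u$, which combined with \eqref{error2} (for $s=0$ and $s=1$) gives $h_T^{-1}\|u-Q_0u\|_{\partial T}^2\le C(h_T^{-2}\|u-Q_0u\|_T^2+\|\nabla(u-Q_0u)\|_T^2)\le Ch_T^{2k}\|u\|_{k+1,T}^2$. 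For the second piece I would exploit the $L^2(e)$-optimality of $Q_b$ on $P_q(e)$, estimating $\|u-Q_bu\|_e$ by the trace of a suitable interior polynomial approximation and again invoking \eqref{tracein} and \eqref{error2}. Summing over $T\in\T_h$ and taking square roots then delivers $\|u-Q_hu\|_{1,h}\le Ch^k\|u\|_{k+1}$, and the norm equivalence closes the argument.

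The \emph{main obstacle} I anticipate is the boundary jump term $\sum_T h_T^{-1}\|Q_bu-Q_0u\|_{\partial T}^2$, since this is the only place where the interplay between the interior degree $k$ and the boundary degree $q\le k$ enters. The delicate point is to bound $\|u-Q_bu\|_{\partial T}$ at the optimal order $h_T^{k+1/2}$ rather than losing powers of $h_T$: the clean way is to compare $Q_bu$ against the trace of an interior $L^2$ projection lying in the polynomial space used by $Q_b$, and then convert the resulting volume error into a boundary error via \eqref{tracein}, so that only $H^{k+1}(T)$-regularity of $u$ is required and no uncontrolled edge Sobolev norms appear. Once this step is carried out carefully, every contribution is of order $h^{2k}\|u\|_{k+1}^2$ and the stated estimate follows.
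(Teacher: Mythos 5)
Your overall architecture is sound and, at the level of estimates, coincides with the paper's: both arguments reduce the energy norm to the two quantities $\sum_{T}\|\nabla(u-Q_0u)\|_T^2$ and $\sum_{T}h_T^{-1}\|Q_0u-Q_bu\|_{\partial T}^2$, and both dispatch them with the trace inequalities \eqref{tracein}--\eqref{trace} and the projection estimate \eqref{error2} with $n=k$ and $s=0,1$. The one genuine flaw is your opening step: Lemma \ref{normeqva} is stated and proved only for $v=\{v_0,v_b\}\in V_h$, i.e., for piecewise-polynomial weak functions, whereas $u-Q_hu=\{u-Q_0u,\ u|_{\partial T}-Q_bu\}$ does not belong to $V_h$ because $u-Q_0u$ is not a polynomial. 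The lower-bound half $C_1\|v\|_{1,h}\leq \3bar v\3bar$ genuinely does not extend beyond $V_h$ (its proof rests on bubble functions and domain inverse inequalities valid only for polynomials), so you cannot cite the lemma as a black box. Fortunately you use only the upper-bound half, and inspecting its proof shows that the polynomial trace inequality \eqref{trace} is applied only to the test function $\bvarphi\in[P_r(T)]^d$, never to $v_0$ or $v_b$; hence $\3bar v\3bar\leq C_2\|v\|_{1,h}$ does hold for any weak function with $v_0\in H^1(T)$ and $v_b\in L^2(\partial T)$. You must either say this explicitly or re-derive the bound, and the re-derivation is precisely what the paper does: it expands $(\nabla_w(u-Q_hu),\bv)_T$ via \eqref{2.4new} for an arbitrary $\bv\in[P_r(T)]^d$, estimates by Cauchy--Schwarz together with \eqref{tracein}--\eqref{trace} and \eqref{error2}, and then chooses $\bv=\nabla_w(u-Q_hu)$, never invoking Lemma \ref{normeqva} at all.

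On the boundary term, your plan is workable and the ``main obstacle'' you flag is real, but note what your fix actually yields: comparing $Q_bu$ with the trace of the interior $L^2$ projection onto $P_q(T)$ gives $\|u-Q_bu\|_{\partial T}\leq Ch_T^{q+1/2}\|u\|_{q+1,T}$, which matches the advertised order $h_T^{k+1/2}$ only when $q=k$. This is not a defect relative to the paper: the paper's own passage from $\|Q_0u-Q_bu\|_{\partial T}$ to $\big(h_T^{-1}\|Q_0u-u\|_T^2+h_T\|Q_0u-u\|_{1,T}^2\big)^{1/2}$ silently uses the best-approximation property of $Q_b$ with competitor $Q_0u|_e$, which likewise requires the trace of $P_k(T)$ to lie in $P_q(e)$. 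But since you explicitly claim order $h_T^{k+1/2}$ with only $H^{k+1}(T)$-regularity, you should state the restriction $q=k$ (given the standing assumption $k\geq q$) at that point rather than leave it implicit.
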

\begin{proof}
Using \eqref{2.4new}, Cauchy-Schwarz inequality, the trace inequalities \eqref{tracein}-\eqref{trace}, the estimate \eqref{error2} with $n=k$ and $s=0, 1$,  we have  
\begin{equation*}
\begin{split}
&\quad\sum_{T\in {\cal T}_h}(\nabla_w(u-Q_hu), \bv)_T\\
&=\sum_{T\in {\cal T}_h}(\nabla(u-Q_0u),  \bv)_T+\langle Q_0u-Q_bu, \bv\cdot\bn\rangle_{\partial T}\\
&\leq \Big(\sum_{T\in {\cal T}_h}\|\nabla(u-Q_0u)\|^2_T\Big)^{\frac{1}{2}} \Big(\sum_{T\in {\cal T}_h}\|\bv\|_T^2\Big)^{\frac{1}{2}}\\&\quad+ \Big(\sum_{T\in {\cal T}_h} \|Q_0u-Q_bu\|_{\partial T} ^2\Big)^{\frac{1}{2}}\Big(\sum_{T\in {\cal T}_h} \|\bv\|_{\partial T}^2\Big)^{\frac{1}{2}}\\
&\leq\Big(\ \sum_{T\in {\cal T}_h} \|\nabla(u-Q_0u)\|_T^2\Big)^{\frac{1}{2}}\Big(\sum_{T\in {\cal T}_h} \|\bv\|_T^2\Big)^{\frac{1}{2}}\\&\quad+\Big(\sum_{T\in {\cal T}_h}h_T^{-1} \|Q_0u-u\|_{T} ^2+h_T \|Q_0u-u\|_{1,T} ^2\Big)^{\frac{1}{2}}\Big(\sum_{T\in {\cal T}_h}Ch_T^{-1}\|\bv\|_T^2\Big)^{\frac{1}{2}}\\
&\leq Ch^k\|u\|_{k+1}\Big(\sum_{T\in {\cal T}_h} \|\bv\|_T^2\Big)^{\frac{1}{2}},
\end{split}
\end{equation*}
 for any $\bv\in [P_r(T)]$. 
 
Letting $\bv=\nabla_w(u-Q_hu)$ gives 
$$
\sum_{T\in {\cal T}_h}(\nabla_w(u-Q_hu), \nabla_w(u-Q_hu))_T\leq 
 Ch^k\|u\|_{k+1}\3bar u-Q_hu \3bar.$$  
 
 This completes the proof of the lemma.
\end{proof}

\begin{theorem}
Assume  the exact solution $u$ of the Poisson equation \eqref{model} is sufficiently regular such that $u\in H^{k+1} (\Omega)$. There exists a constant $C$, such that the following error estimate holds true; i.e., 
\begin{equation}\label{trinorm}
\3bar u-u_h\3bar \leq Ch^k\|u\|_{k+1}.
\end{equation}
\end{theorem}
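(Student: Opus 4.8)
The plan is to combine the error equation of Lemma \ref{errorequa} with the projection estimate \eqref{erroresti1} and the norm equivalence \eqref{normeq} in a triangle-inequality/energy argument. First I would split the error as $e_h = u - u_h = (u - Q_h u) + \xi_h$, where $\xi_h := Q_h u - u_h \in V_h^0$. Since $\3bar u - Q_h u\3bar$ is already controlled by \eqref{erroresti1}, it suffices to bound $\3bar \xi_h\3bar$ and then invoke
$$
\3bar u - u_h\3bar \leq \3bar u - Q_h u\3bar + \3bar \xi_h\3bar .
$$

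To estimate $\3bar \xi_h\3bar$, I would test the error equation \eqref{erroreqn} with $v = \xi_h$ and use the linearity of $\nabla_w$ to write $\nabla_w e_h = \nabla_w(u - Q_h u) + \nabla_w \xi_h$, which gives
$$
\3bar \xi_h\3bar^2 = \ell(u, \xi_h) - (\nabla_w(u - Q_h u), \nabla_w \xi_h).
$$
The last term is bounded by Cauchy–Schwarz and \eqref{erroresti1} by $C h^k \|u\|_{k+1}\, \3bar \xi_h\3bar$, so everything reduces to estimating the boundary functional $\ell(u, \xi_h)$.

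This is the heart of the argument, and the step I expect to be the main obstacle. I would bound $\ell(u,v) = \sum_{T\in\T_h}\langle (I-Q_r)\nabla u\cdot\bn, v_0-v_b\rangle_{\partial T}$ by a \emph{weighted} Cauchy–Schwarz inequality chosen so the $v_0-v_b$ factor reproduces the discrete $H^1$ seminorm:
$$
|\ell(u,v)| \leq \Big(\sum_{T\in\T_h} h_T \|(I-Q_r)\nabla u\|_{\partial T}^2\Big)^{1/2} \Big(\sum_{T\in\T_h} h_T^{-1}\|v_0-v_b\|_{\partial T}^2\Big)^{1/2},
$$
where the second factor is at most $\|v\|_{1,h}$ by \eqref{disnorm}. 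For the first factor, because $(I-Q_r)\nabla u$ is \emph{not} polynomial on $T$, I must apply the general trace inequality \eqref{tracein} (not \eqref{trace}), yielding
$$
h_T \|(I-Q_r)\nabla u\|_{\partial T}^2 \leq C\big(\|(I-Q_r)\nabla u\|_T^2 + h_T^2\|(I-Q_r)\nabla u\|_{1,T}^2\big).
$$
Summing over $T$ and invoking \eqref{error1} with $m=k$ and $s=0,1$ (admissible since $r = 2N+k-1 \geq k$) bounds both contributions by $C h^{2k}\|u\|_{k+1}^2$, so that $|\ell(u,v)| \leq C h^k\|u\|_{k+1}\,\|v\|_{1,h} \leq C h^k\|u\|_{k+1}\,\3bar v\3bar$ after applying \eqref{normeq}.

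Combining these estimates gives $\3bar \xi_h\3bar^2 \leq C h^k\|u\|_{k+1}\,\3bar \xi_h\3bar$, hence $\3bar \xi_h\3bar \leq C h^k\|u\|_{k+1}$, and the triangle inequality then delivers \eqref{trinorm}. The delicate point throughout is the bookkeeping of powers of $h_T$: the weights in the Cauchy–Schwarz split must be chosen precisely so that both the $s=0$ and the $s=1$ terms arising from \eqref{tracein} yield the optimal order $O(h^k)$, and this matching is what makes the estimate sharp.
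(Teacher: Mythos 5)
Your proposal is correct and takes essentially the same route as the paper: the identical bound on $\ell(u,v)$ via weighted Cauchy--Schwarz, the trace inequality \eqref{tracein}, and \eqref{error1} with $m=k$, $s=0,1$, followed by \eqref{normeq}, together with the projection estimate \eqref{erroresti1} and the test function $v=Q_hu-u_h$ in the error equation \eqref{erroreqn}. The only difference is organizational: the paper expands $\3bar u-u_h\3bar^2$ directly and absorbs the resulting $\3bar u-u_h\3bar$ factors, whereas you first bound $\3bar Q_hu-u_h\3bar$ and finish with the triangle inequality, a slightly cleaner arrangement of the same estimates.
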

\begin{proof}
For   the right-hand side of the error equation \eqref{erroreqn}, using Cauchy-Schwarz inequality, the trace inequality \eqref{tracein},  the estimate \eqref{error1} with $m=k$ and $s=0,1$, and \eqref{normeq}, we have   
\begin{equation}\label{erroreqn1}
\begin{split}
&\Big|\sum_{T\in {\cal T}_h}\langle (I- Q_r)\nabla u\cdot\bn, v_0-v_b\rangle_{\partial T}\Big|\\
\leq & C(\sum_{T\in {\cal T}_h}\|(I- Q_r)\nabla u\cdot\bn\|^2_T+h_T^2\|\nabla((I- Q_r)\nabla u\cdot\bn)\|^2_T)^{\frac{1}{2}} \\&\cdot(\sum_{T\in {\cal T}_h}h_T^{-1}\|v_0-v_b\|^2_{\partial T})^{\frac{1}{2}}\\
\leq & Ch^k\|u\|_{k+1} \| v\|_{1,h}\\
\leq & Ch^k\|u\|_{k+1} \3bar v\3bar.
\end{split}
\end{equation}
Substituting \eqref{erroreqn1}  into \eqref{erroreqn}  gives
\begin{equation}\label{err}
(\nabla_w e_h, \nabla_w v)\leq   Ch^k\|u\|_{k+1} \3bar  v\3bar.
\end{equation} 

Using Cauchy-Schwarz inequality, letting $v=Q_hu-u_h$ in \eqref{err}, the  estimate \eqref{erroresti1},  gives
\begin{equation*}
\begin{split}
& \3bar u-u_h\3bar^2\\=&\sum_{T\in {\cal T}_h}(\nabla_w (u-u_h), \nabla_w (u-Q_hu))_T+(\nabla_w (u-u_h), \nabla_w (Q_hu-u_h))_T\\
\leq &\Big(\sum_{T\in {\cal T}_h}\|\nabla_w (u-u_h)\|^2_T\Big)^{\frac{1}{2}} \Big(\sum_{T\in {\cal T}_h}\| \nabla_w (u-Q_hu)\|^2_T\Big)^{\frac{1}{2}} \\& +\sum_{T\in {\cal T}_h} (\nabla_w (u-u_h), \nabla_w (Q_hu-u_h))_T\\
\leq &\3bar u-u_h \3bar  \3bar u-Q_hu \3bar+ Ch^k\|u\|_{k+1} \Big(\sum_{T\in {\cal T}_h}\|\nabla_w (Q_hu-u_h)\|^2_T\Big)^{\frac{1}{2}} \\
\leq &\3bar u-u_h  \3bar  h^k\|u\|_{k+1} + Ch^k\|u\|_{k+1} \Big(\sum_{T\in {\cal T}_h} (\|\nabla_w (Q_hu-u)\|^2_T+\|\nabla_w (u-u_h)\|^2_T\Big)^{\frac{1}{2}} \\
\leq &\3bar u-u_h  \3bar  h^k\|u\|_{k+1} + Ch^k\|u\|_{k+1}   h^k\|u\|_{k+1}+Ch^k\|u\|_{k+1} \3bar u-u_h\3bar.
\end{split}
\end{equation*}
This gives
\begin{equation*}
\begin{split}
 \3bar u-u_h\3bar  \leq Ch^k\|u\|_{k+1}.
\end{split}
\end{equation*} 

This completes the proof of the theorem.
\end{proof}

\section{Error Estimates in $L^2$ Norm}

The standard duality argument is employed to obtain the $L^2$ error estimate. Recall that $e_h=u-u_h=\{e_0, e_b\}$.  We further introduce $  \zeta_h =Q_hu - u_h=\{  \zeta_0,   \zeta_b\}$. 

The dual problem for the Poisson equation \eqref{model}  seeks $w \in H_0^1(\Omega)$ satisfying 
\begin{equation}\label{dual}
-\Delta w=  \zeta_0, \qquad \text{in}\ \Omega.
\end{equation}
Assume that the $H^2$-regularity property for the dual problem \eqref{dual} holds true; i.e.,
 \begin{equation}\label{regu2}
 \|w\|_2\leq C\|  \zeta_0\|.
 \end{equation}
 
 \begin{theorem}
Assume  the exact solution $u$ of the Poisson equation \eqref{model} is sufficiently regular such that $u\in H^{k+1} (\Omega)$.  Let $u_h\in V_h^0$ be the numerical solution of the WG Algorithm \ref{PDWG1}. Assume that  the $H^2$-regularity assumption  \eqref{regu2}  holds true.  There exists a constant $C$ such that 
\begin{equation*}
\|e_0\|\leq Ch^{k+1}\|u\|_{k+1}.
\end{equation*}
 \end{theorem}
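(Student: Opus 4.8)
The plan is to run an Aubin--Nitsche duality argument, reducing the bound on $\|e_0\|$ to a bound on $\|\zeta_0\|$ and gaining one extra power of $h$ from the $H^2$-regularity \eqref{regu2} of the dual problem \eqref{dual}. First I would write $e_0=(u-Q_0u)+\zeta_0$, so that by the triangle inequality and the approximation estimate \eqref{error2} (with $n=k$, $s=0$) it suffices to prove $\|\zeta_0\|\le Ch^{k+1}\|u\|_{k+1}$. To bound $\|\zeta_0\|$, I would repeat the computation leading to \eqref{54} in the proof of Lemma \ref{errorequa}, but applied to the dual solution $w$ (using $-\Delta w=\zeta_0$ in place of $-\Delta u=f$, and the continuity of $\nabla w$ to discard the $v_b$-flux term as in that proof). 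This yields, for every $v=\{v_0,v_b\}\in V_h^0$, the dual identity
\begin{equation*}
(\nabla_w w,\nabla_w v)=(\zeta_0,v_0)+\sum_{T\in\T_h}\langle (I-Q_r)\nabla w\cdot\bn,\, v_0-v_b\rangle_{\partial T}.
\end{equation*}

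Taking $v=\zeta_h=\{\zeta_0,\zeta_b\}\in V_h^0$ gives $\|\zeta_0\|^2=(\nabla_w w,\nabla_w\zeta_h)-\ell(w,\zeta_h)$, where $\ell(w,\cdot)$ denotes the functional of Lemma \ref{errorequa} with $u$ replaced by $w$. Next, since $\zeta_h=e_h-(u-Q_hu)$, the error equation \eqref{erroreqn} gives $(\nabla_w\zeta_h,\nabla_w v)=\ell(u,v)-(\nabla_w(u-Q_hu),\nabla_w v)$ for all $v\in V_h^0$. Splitting $w=Q_hw+(w-Q_hw)$ and using this with $v=Q_hw\in V_h^0$ (note $Q_bw=0$ on $\partial\Omega$), I arrive at the working identity
\begin{equation*}
\|\zeta_0\|^2=\ell(u,Q_hw)-(\nabla_w(u-Q_hu),\nabla_w Q_hw)+(\nabla_w\zeta_h,\nabla_w(w-Q_hw))-\ell(w,\zeta_h).
\end{equation*}

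The two ``outer'' terms are routine. For $(\nabla_w\zeta_h,\nabla_w(w-Q_hw))$ I would apply Cauchy--Schwarz together with the energy estimate $\3bar\zeta_h\3bar\le\3bar e_h\3bar+\3bar u-Q_hu\3bar\le Ch^k\|u\|_{k+1}$ (from \eqref{trinorm} and \eqref{erroresti1}) and the bound $\3bar w-Q_hw\3bar\le Ch\|w\|_2$ (from the argument of \eqref{erroresti1} with $k=1$), followed by \eqref{regu2}. For $\ell(w,\zeta_h)$ I would use Cauchy--Schwarz, the trace inequality \eqref{tracein}, the estimate \eqref{error1} with $m=1$ to control $(I-Q_r)\nabla w$ at order $h\|w\|_2$, and the norm equivalence \eqref{normeq} to bound $\big(\sum_T h_T^{-1}\|\zeta_0-\zeta_b\|_{\partial T}^2\big)^{1/2}\le C\3bar\zeta_h\3bar$. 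Both terms then contribute $Ch^{k+1}\|u\|_{k+1}\|\zeta_0\|$.

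The main obstacle is the pair $\ell(u,Q_hw)-(\nabla_w(u-Q_hu),\nabla_w Q_hw)$: estimated crudely each is only $O(h^{k})$, so the extra order must be extracted through cancellation. I would rewrite $(\nabla_w(u-Q_hu),\nabla_w Q_hw)$ by inserting $\nabla_w Q_hw=Q_r\nabla w-\nabla_w(w-Q_hw)$ (the correction pairing off against $\3bar u-Q_hu\3bar$ at order $h^{k+1}$), then expand $(\nabla_w(u-Q_hu),Q_r\nabla w)$ via \eqref{2.4new} and integrate by parts, turning it into a volume term controlled at order $h^{k+1}$ by \eqref{error2} plus boundary integrals; and I would expand $\ell(u,Q_hw)$ by inserting the continuous traces of $u$ and $w$. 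The crucial structural facts are that $u-Q_bu$ and $w-Q_bw$ are single-valued across interior faces and vanish on $\partial\Omega$ (because $u,w$ are continuous and zero there), so that the contributions of the continuous fluxes $\nabla u\cdot\bn$ and $\nabla w\cdot\bn$ telescope to zero, leaving only jumps of $Q_r\nabla u$ and face projection errors orthogonal to $P_q(e)$; these remaining pieces are genuinely of order $h^{k+1}\|u\|_{k+1}\|w\|_2$ and can be bounded using \eqref{error1}, \eqref{error2}, the trace inequalities \eqref{tracein}--\eqref{trace}, and the orthogonality of $Q_b$. Assembling all four terms and invoking \eqref{regu2} yields $\|\zeta_0\|^2\le Ch^{k+1}\|u\|_{k+1}\|\zeta_0\|$, hence $\|\zeta_0\|\le Ch^{k+1}\|u\|_{k+1}$, which together with the reduction step completes the proof.
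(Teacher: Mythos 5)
Your proposal is correct in outline and follows the same duality skeleton as the paper: your reduction $e_0=(u-Q_0u)+\zeta_0$, your dual identity (which is exactly \eqref{54} applied to $w$ with $-\Delta w=\zeta_0$), and your working identity are all present in the paper's proof --- indeed your pair $-(\nabla_w(u-Q_hu),\nabla_w Q_hw)+(\nabla_w\zeta_h,\nabla_w(w-Q_hw))$ regroups algebraically into the paper's $J_2+J_3$, and your treatment of the two ``outer'' terms coincides with the paper's bounds for $J_2$ and $J_4$. The genuine divergence is your handling of the pair $\ell(u,Q_hw)$ and $(\nabla_w(u-Q_hu),\nabla_w w)$, and there your diagnosis is slightly off: no cancellation between the two terms is needed, since the paper bounds each one separately at order $h^{k+1}$. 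For $J_1=\ell(u,Q_hw)$ the point is that both factors are projection errors: $\|Q_0w-Q_bw\|_{\partial T}\le\|Q_0w-w\|_{\partial T}\le Ch_T^{3/2}\|w\|_{2,T}$ by \eqref{tracein} and \eqref{error2} with $n=1$, paired against $\|(I-Q_r)\nabla u\cdot\bn\|_{\partial T}\le Ch_T^{k-1/2}\|u\|_{k+1,T}$, so the ``crude $O(h^k)$'' you fear never occurs. For $J_3$ the paper inserts the elementwise constant projection $Q^0\nabla_w w$, which pairs to zero against $\nabla_w(Q_hu-u)$ by \eqref{2.4} together with the orthogonalities of $Q_0$ and $Q_b$ (equation \eqref{ee}), leaving $\|Q_r\nabla w-Q^0Q_r\nabla w\|\le Ch\|w\|_2$ times $\3bar u-Q_hu\3bar$. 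Your alternative --- expanding via \eqref{2.4new}, inserting the continuous fluxes and telescoping using the single-valuedness of $u-Q_bu$ and $w-Q_bw$ across interior faces, plus $Q_b$-orthogonality on faces --- is a viable, standard route that exploits the same two mechanisms (orthogonality and continuity) in a more laborious bookkeeping; the paper's $Q^0$-trick buys a much shorter argument. One technical caveat in your version: for the volume term $(u-Q_0u,\nabla\cdot Q_r\nabla w)_T$ a bare inverse inequality loses a power of $h$; you need the $s=1$, $m=1$ case of \eqref{error1} to get $\|\nabla\cdot(Q_r\nabla w-\nabla w)\|_T\le C\|w\|_{2,T}$, after which your outline assembles, with \eqref{regu2}, to the same bound $\|\zeta_0\|\le Ch^{k+1}\|u\|_{k+1}$.
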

 
 \begin{proof}
 Testing \eqref{dual} by $  \zeta_0$, using the usual integration by parts, the fact that $\sum_{T\in {\cal T}_h} \langle \nabla w\cdot\bn,   \zeta_b\rangle_{\partial T}=\langle \nabla w\cdot\bn,   \zeta_b\rangle_{\partial \Omega}=0$ since  $  \zeta_b=Q_bu-u_b=0$ on $\partial\Omega$, we obtain
 \begin{equation}\label{e1}
 \begin{split}
 \|  \zeta_0\|^2 =-(\Delta w,   \zeta_0) 
  = \sum_{T\in {\cal T}_h}(\nabla w, \nabla   \zeta_0)_T-\langle \nabla w\cdot\bn,   \zeta_0-  \zeta_b \rangle_{\partial T}.
 \end{split}
 \end{equation}
 Letting  $u=w$ and $v=  \zeta_h$ in \eqref{54} gives
$$ \sum_{T\in {\cal T}_h}(\nabla_w w, \nabla_w   \zeta_h)_T\\  = \sum_{T\in {\cal T}_h}(\nabla w, \nabla   \zeta_0)_T+\langle   Q_r\nabla w\cdot \bn,   \zeta_b-  \zeta_0\rangle_{\partial T},
$$
which is equivalent to 
$$
\sum_{T\in {\cal T}_h}(\nabla w, \nabla   \zeta_0)_T=\sum_{T\in {\cal T}_h}(\nabla_w w, \nabla_w   \zeta_h)_T-\langle   Q_r\nabla w\cdot \bn,   \zeta_b-  \zeta_0\rangle_{\partial T}.
$$
Substituting the above equation into \eqref{e1} and using \eqref{erroreqn} gives
\begin{equation}\label{e2}
 \begin{split}
& \|  \zeta_0\|^2  \\
  = &\sum_{T\in {\cal T}_h}(\nabla_w w, \nabla_w   \zeta_h)_T-\langle   Q_r\nabla w\cdot \bn,   \zeta_b-  \zeta_0\rangle_{\partial T} -\langle \nabla w\cdot\bn,   \zeta_0-  \zeta_b \rangle_{\partial T}\\
  =& \sum_{T\in {\cal T}_h}(\nabla_w w, \nabla_w e_h)_T+(\nabla_w w, \nabla_w (Q_hu-u))_T-\ell(w,   \zeta_h)\\ 
  =& \sum_{T\in {\cal T}_h}(\nabla_w Q_hw, \nabla_w e_h)_T+(\nabla_w (w-Q_hw), \nabla_w e_h)_T\\&+(\nabla_w w, \nabla_w (Q_hu-u))_T-\ell(w,   \zeta_h)\\ 
  =&\ell(u, Q_hw) + \sum_{T\in {\cal T}_h}(\nabla_w (w-Q_hw), \nabla_w e_h)_T\\&+(\nabla_w w, \nabla_w (Q_hu-u))_T-\ell(w,   \zeta_h)\\
  =&J_1+J_2+J_3+J_4.
 \end{split}
 \end{equation}
 
We will estimate the four terms $J_i$ for $i=1,\cdots, 4$ on the last line of \eqref{e2} one by one.

Regarding to $J_1$, using Cauchy-Schwarz inequality, the trace inequality \eqref{tracein}, the inverse inequality,  the estimate \eqref{error1} with $m=k$ and $s=0, 1$, the estimate \eqref{error2} with $n=1$, gives
\begin{equation}\label{ee1}
\begin{split}
J_1=&\ell(u, Q_hw)\\
\leq &\Big|\sum_{T\in {\cal T}_h}\langle (I- Q_r)\nabla u\cdot\bn, Q_0w-Q_bw\rangle_{\partial T}\Big|\\
\leq& \Big(\sum_{T\in {\cal T}_h}\| (I- Q_r)\nabla u\cdot\bn\|_{\partial T}^2\Big)^{\frac{1}{2}} \Big(\sum_{T\in {\cal T}_h}\|Q_0w-Q_bw\|_{\partial T}^2\Big)^{\frac{1}{2}} \\
\leq& \Big(\sum_{T\in {\cal T}_h}h_T^{-1}\| (I- Q_r)\nabla u\cdot\bn\|_{T}^2+h_T \| (I- Q_r)\nabla u\cdot\bn\|_{1, T}^2\Big)^{\frac{1}{2}}\\& \Big(\sum_{T\in {\cal T}_h}h_T^{-1}\|Q_0w- w\|_{T}^2+h_T\|Q_0w- w\|_{1,T}^2\Big)^{\frac{1}{2}} \\
\leq &Ch^{-1}h^k\|u\|_{k+1}h^2\|w\|_2\\
\leq & Ch^{k+1}\|u\|_{k+1}\|w\|_2.
\end{split}
\end{equation}

For $J_2$, using Cauchy-Schwarz inequality, \eqref{erroresti1} with $k=1$ and \eqref{trinorm} gives
\begin{equation}\label{ee2}
\begin{split}
J_2\leq \3bar w-Q_hw\3bar \3bar e_h\3bar\leq Ch^{k}\|u\|_{k+1}h\|w\|_2\leq Ch^{k+1}\|u\|_{k+1}\|w\|_2.
\end{split}
\end{equation}

  For $J_3$, denote by $Q^0$ a $L^2$ projection onto $[P_0(T)]$. Using \eqref{2.4} gives
  \begin{equation}\label{ee}
 \begin{split}
 & (\nabla_w (Q_hu-u), Q^0\nabla_w w)_T\\ =& -(Q_0u-u, \nabla \cdot Q^0\nabla_w w)_T +\langle Q_bu-u, Q^0\nabla_w w \cdot \bn\rangle_{\partial T}=0.
 \end{split}
 \end{equation}
  Using \eqref{ee}, Cauchy-Schwarz inequality, \eqref{pro} and \eqref{erroresti1}, gives 
  \begin{equation}
  \begin{split}
  J_3\leq &|\sum_{T\in {\cal T}_h}(\nabla_w w, \nabla_w (Q_hu-u))_T|
  \\
  =&|\sum_{T\in {\cal T}_h}(\nabla_w w-Q^0\nabla_ w w, \nabla_w (Q_hu-u))_T|\\
  =&|\sum_{T\in {\cal T}_h}(Q^r\nabla   w-Q^0 Q^r\nabla w, \nabla_w  (Q_hu-u))_T|\\
  \leq & \Big(\sum_{T\in {\cal T}_h}\|Q^r\nabla   w-Q^0 Q^r\nabla w\|_T^2\Big)^{\frac{1}{2}} \3bar Q_hu-u \3bar\\
  \leq & Ch^k\|u\|_{k+1}h\|w\|_2\\
  \leq & Ch^{k+1}\|u\|_{k+1}\|w\|_2.
  \end{split}
  \end{equation}

For $J_4$, using Cauchy-Schwarz inequality, the trace inequality \eqref{tracein},  Lemma \ref{normeqva},   the estimate \eqref{error1} with $m=1$ and $s=0, 1$,  \eqref{erroresti1}, \eqref{trinorm} gives
\begin{equation}\label{ee4}
\begin{split}
J_4=&\ell(w,   \zeta_h)\\
\leq &\Big|\sum_{T\in {\cal T}_h}\langle (I- Q_r)\nabla w\cdot\bn,   \zeta_0-  \zeta_b\rangle_{\partial T}\Big| \\
\leq& \Big(\sum_{T\in {\cal T}_h}\| (I- Q_r)\nabla w\cdot\bn\|_{\partial T}^2\Big)^{\frac{1}{2}} \Big(\sum_{T\in {\cal T}_h}\|  \zeta_0-  \zeta_b\|_{\partial T}^2\Big)^{\frac{1}{2}}   \\
\leq& \Big(\sum_{T\in {\cal T}_h} \| (I- Q_r)\nabla w\cdot\bn\|_{T}^2+h_T^2 \| (I- Q_r)\nabla w\cdot\bn\|_{1, T}^2\Big)^{\frac{1}{2}} \\&\cdot\Big(\sum_{T\in {\cal T}_h}h_T^{-1}\|  \zeta_0-  \zeta_b\|_{\partial T}^2\Big)^{\frac{1}{2}}  \\
\leq& Ch\|w\|_{2} \|   \zeta_h\|_{1,h}  \\
\leq& Ch\|w\|_{2} \3bar   \zeta_h\3bar  \\
\leq &Ch\|w\|_{2} (\3bar u-u_h\3bar+\3bar u-Q_hu\3bar)  \\
\leq & Ch\|w\|_{2}  (h^k\|u\|_{k+1}+h^{k}\|u\|_{k+1})\\
\leq &Ch^{k+1}\|w\|_2\|u\|_{k+1}.
\end{split}
\end{equation}

  Substituting \eqref{ee1}-\eqref{ee4} into \eqref{e2} and using \eqref{regu2} gives
$$
\|  \zeta_0\|^2\leq Ch^{k+1}\|w\|_2\|u\|_{k+1}\leq Ch^{k+1}  \|u\|_{k+1} \|  \zeta_0\|.
$$
This gives
$$
\|  \zeta_0\|\leq Ch^{k+1} \|u\|_{k+1},
$$
which, using triangle inequality and \eqref{error2} with $n=k$, gives
$$
\|e_0\|\leq \|  \zeta_0\|+\|u-Q_0u\|\leq Ch^{k+1}\|u\|_{k+1}. 
$$

This completes the proof of the theorem. 
\end{proof}


\begin{thebibliography}{99}

 \bibitem{pdwg3} {\sc W. Cao, C. Wang and J. Wang},  {\em An $L^p$-Primal-Dual Weak Galerkin Method for div-curl Systems}, Journal of Computational and Applied Mathematics, vol. 422, 114881, 2023.
 \bibitem{pdwg4}{\sc  W. Cao, C. Wang and J. Wang},  {\em An $L^p$-Primal-Dual Weak Galerkin Method for Convection-Diffusion Equations}, Journal of Computational and Applied Mathematics, vol. 419, 114698, 2023. 
 \bibitem{pdwg5}{\sc W. Cao, C. Wang and J. Wang},  {\em A New Primal-Dual Weak Galerkin Method for Elliptic Interface Problems with Low Regularity Assumptions}, Journal of Computational Physics, vol. 470, 111538, 2022.
   \bibitem{wg11}{\sc  S. Cao, C. Wang and J. Wang},  {\em A new numerical method for div-curl Systems with Low Regularity Assumptions}, Computers and Mathematics with Applications, vol. 144, pp. 47-59, 2022.
\bibitem{pdwg10}{\sc  W. Cao and C. Wang},  {\em New Primal-Dual Weak Galerkin Finite Element Methods for Convection-Diffusion Problems}, Applied Numerical Mathematics, vol. 162, pp. 171-191, 2021. 


 

   

 

 \bibitem{wg14}{\sc D. Li, Y. Nie, and C. Wang},  {\em Superconvergence of Numerical Gradient for Weak Galerkin Finite Element Methods on Nonuniform Cartesian Partitions in Three Dimensions}, Computers and Mathematics with Applications, vol 78(3), pp. 905-928, 2019.  
  \bibitem{wg1} {\sc D. Li, C. Wang and J. Wang},  {\em An Extension of the Morley Element on General Polytopal Partitions Using Weak Galerkin Methods}, Journal of Scientific Computing, 100, vol 27, 2024.  
 \bibitem{wg2} {\sc D. Li, C. Wang and S. Zhang},  {\em Weak Galerkin methods for elliptic interface problems on curved polygonal partitions}, Journal of Computational and Applied Mathematics, pp. 115995, 2024. 
\bibitem{wg5} {\sc D. Li, C. Wang, J.  Wang and X. Ye},  {\em Generalized weak Galerkin finite element methods for second order elliptic problems}, Journal of Computational and Applied Mathematics, vol. 445, pp. 115833, 2024.
 \bibitem{wg6} {\sc D. Li, C. Wang, J. Wang and S. Zhang},  {\em High Order Morley Elements for Biharmonic Equations on Polytopal Partitions}, Journal of Computational and Applied Mathematics, Vol. 443, pp. 115757, 2024.
 \bibitem{wg7} {\sc D. Li, C. Wang and J. Wang},  {\em Curved Elements in Weak Galerkin Finite Element Methods}, Computers and Mathematics with Applications, Vol. 153, pp. 20-32, 2024.
\bibitem{wg8} {\sc D. Li, C. Wang and J. Wang},  {\em Generalized Weak Galerkin Finite Element Methods for Biharmonic Equations}, Journal of Computational and Applied Mathematics, vol. 434, 115353, 2023.
 \bibitem{pdwg1} {\sc D. Li, C. Wang and J. Wang},  {\em An $L^p$-primal-dual finite element method for first-order transport problems}, Journal of Computational and Applied Mathematics, vol. 434, 115345, 2023.
 \bibitem{pdwg2} {\sc D. Li and C. Wang},  {\em A simplified primal-dual weak Galerkin finite element method for Fokker-Planck type equations}, Journal of Numerical Methods for Partial Differential Equations, vol 39, pp. 3942-3963, 2023.
\bibitem{pdwg6}{\sc  D. Li, C. Wang and J. Wang},  {\em Primal-Dual Weak Galerkin Finite Element Methods for Transport Equations in Non-Divergence Form}, Journal of Computational and Applied Mathematics, vol. 412, 114313, 2022.
  \bibitem{wg13}{\sc  D. Li, C. Wang, and J. Wang},  {\em Superconvergence of the Gradient Approximation for Weak Galerkin Finite Element Methods on Rectangular Partitions}, Applied Numerical Mathematics, vol. 150, pp. 396-417, 2020.
\bibitem{lxz} {\sc B. Li, X. Xie and S. Zhang},  {\em  BPS preconditioners for a weak Galerkin finite
element method for 2D diffusion problems with strongly discontinuous
coefficients}, Computers \& Mathematics with Applications, 76(4), pp.701-724, 2018.  



 \bibitem{fedi}{\sc  C. Wang},  {\em A Preconditioner for the FETI-DP Method for Mortar-Type Crouzeix-Raviart Element Discretization}, Applications of Mathematics, Vol. 59, 6, pp. 653-672, 2014. 
   \bibitem{wg15}{\sc C. Wang},  {\em New Discretization Schemes for Time-Harmonic Maxwell Equations by Weak Galerkin Finite Element Methods}, Journal of Computational and Applied Mathematics, Vol. 341, pp. 127-143, 2018.  
 
   \bibitem{pdwg7}{\sc  C. Wang},  {\em Low Regularity Primal-Dual Weak Galerkin Finite Element Methods for Ill-Posed Elliptic Cauchy Problems}, Int. J. Numer. Anal. Mod., vol. 19(1), pp. 33-51, 2022.
 
 \bibitem{pdwg8}{\sc  C. Wang},  {\em A Modified Primal-Dual Weak Galerkin Finite Element Method for Second Order Elliptic Equations in Non-Divergence Form}, Int. J. Numer. Anal. Mod., vol. 18(4), pp. 500-523, 2021.

 
 \bibitem{pdwg13}{\sc  C. Wang},  {\em A New Primal-Dual Weak Galerkin Finite Element Method for Ill-posed Elliptic Cauchy Problems}, Journal of Computational and Applied Mathematics, vol 371, 112629, 2020.
 
  
  
   
 \bibitem{pdwg11}{\sc  C. Wang and J. Wang},  {\em A Primal-Dual Weak Galerkin Finite Element Method for Fokker-Planck Type Equations}, SIAM Numerical Analysis, vol. 58(5), pp. 2632-2661, 2020.
 \bibitem{pdwg12}{\sc  C. Wang and J. Wang},  {\em A Primal-Dual Finite Element Method for First-Order Transport Problems}, Journal of Computational Physics, Vol. 417, 109571, 2020.
  
 \bibitem{pdwg14}{\sc  C. Wang and J. Wang},  {\em Primal-Dual Weak Galerkin Finite Element Methods for Elliptic Cauchy Problems}, Computers and Mathematics with Applications, vol 79(3), pp. 746-763, 2020. 
 \bibitem{pdwg15}{\sc  C. Wang and J. Wang},  {\em A Primal-Dual Weak Galerkin Finite Element Method for Second Order Elliptic Equations in Non-Divergence form}, Mathematics of Computation, Vol. 87, pp. 515-545, 2018.  
   
 \bibitem{wg17}{\sc C. Wang and J. Wang},  {\em Discretization of Div-Curl Systems by Weak Galerkin Finite Element Methods on Polyhedral Partitions}, Journal of Scientific Computing, Vol. 68, pp. 1144-1171, 2016.    
   \bibitem{wg19}{\sc C. Wang and J. Wang},  {\em A Hybridized Formulation for Weak Galerkin Finite Element Methods for Biharmonic Equation on Polygonal or Polyhedral Meshes}, International Journal of Numerical Analysis and Modeling, Vol. 12, pp. 302-317, 2015. 
 \bibitem{wg20}{\sc  J. Wang and C. Wang},  {\em Weak Galerkin Finite Element Methods for Elliptic PDEs}, Science China, Vol. 45, pp. 1061-1092, 2015.  
 \bibitem{wg21}{\sc C. Wang and J. Wang},  {\em An Efficient Numerical Scheme for the Biharmonic Equation by Weak Galerkin Finite Element Methods on Polygonal or Polyhedral Meshes}, Journal of Computers and Mathematics with Applications, Vol. 68, 12, pp. 2314-2330, 2014.  
 
   \bibitem{wg18}{\sc C. Wang, J. Wang, R. Wang and R. Zhang},  {\em A Locking-Free Weak Galerkin Finite Element Method for Elasticity Problems in the Primal Formulation}, Journal of Computational and Applied Mathematics, Vol. 307, pp. 346-366, 2016.   
 

 \bibitem{wg12}{\sc  C. Wang, J. Wang, X. Ye and S. Zhang},  {\em De Rham Complexes for Weak Galerkin Finite Element Spaces}, Journal of Computational and Applied Mathematics, vol. 397, pp. 113645, 2021.
 
 \bibitem{wg3} {\sc C. Wang, J. Wang and S. Zhang},  {\em Weak Galerkin Finite Element Methods for Optimal Control Problems Governed by Second Order Elliptic Partial Differential Equations}, Journal of Computational and Applied Mathematics, in press, 2024. 
 
 \bibitem{itera} {\sc C. Wang, J. Wang and S. Zhang},  {\em A parallel iterative procedure for weak Galerkin methods for second order elliptic problems}, International Journal of Numerical Analysis and Modeling, vol. 21(1), pp. 1-19, 2023.
 \bibitem{wg9} {\sc C. Wang, J. Wang and S. Zhang},  {\em Weak Galerkin Finite Element Methods for Quad-Curl Problems}, Journal of Computational and Applied Mathematics, vol. 428, pp. 115186, 2023.
   \bibitem{wy3655} {\sc J. Wang, and X. Ye}, {\em A weak Galerkin mixed finite element method for second-order elliptic problems}, Math. Comp., vol. 83, pp. 2101-2126, 2014.


  \bibitem{wg4} {\sc C. Wang, X. Ye and S. Zhang},  {\em A Modified weak Galerkin finite element method for the Maxwell equations on polyhedral meshes}, Journal of Computational and Applied Mathematics, vol. 448, pp. 115918, 2024. 
 \bibitem{wg10}{\sc  C. Wang and S. Zhang},  {\em A Weak Galerkin Method for Elasticity Interface Problems}, Journal of Computational and Applied Mathematics, vol. 419, 114726, 2023. 
  \bibitem{pdwg9}{\sc  C. Wang and L. Zikatanov},  {\em Low Regularity Primal-Dual Weak Galerkin Finite Element Methods for Convection-Diffusion Equations}, Journal of Computational and Applied Mathematics, vol 394, 113543, 2021.
 
 \bibitem{wg16}{\sc  C. Wang and H. Zhou},  {\em A Weak Galerkin Finite Element Method for a Type of Fourth Order Problem arising from Fluorescence Tomography}, Journal of Scientific Computing, Vol. 71(3), pp. 897-918, 2017.  

 \bibitem{ye}{\sc  Y. Xiu and S. Zhang},  {\em  A stabilizer-free weak Galerkin finite element method on
polytopal meshes}, Journal of  Computational and Applied Mathematics, vol 371, 112699, 2020.
\end{thebibliography}
\end{document}